\newtheorem{thm}{Theorem}
\newtheorem{lem}[thm]{Lemma}
\newtheorem{cor}[thm]{Corollary}
\theoremstyle{definition}
\newtheorem{rmk}[thm]{Remark}
\newcommand\BII[1]{\mathit{II}_{#1}}
\newcommand\BIII[1]{\mathit{III}_{#1}}
\def\eqref#1{(\ref{#1})}
\title{Essential obstacles to Helly circular-arc graphs}
\author{Mart\'{\i}n D.~Safe\thanks{Departamento de Matem\'atica, Universidad Nacional del Sur (UNS), Bah\'ia Blanca, Argentina and INMABB, Universidad Nacional del Sur (UNS)-CONICET, Bah\'ia Blanca, Argentina. E-mail address: \texttt{msafe@uns.edu.ar}}}
\begin{document}

\maketitle

\abstract{A Helly circular-arc graph is the intersection graph of a set of arcs on a circle having the Helly property. We introduce essential obstacles, which are a refinement of the notion of obstacles, and prove that essential obstacles are precisely the minimal forbidden induced circular-arc subgraphs for the class of Helly circular-arc graphs. We show that it is possible to find in linear time, in any given obstacle, some minimal forbidden induced subgraph for the class of Helly circular-arc graphs contained as an induced subgraph. Moreover, relying on an existing linear-time algorithm for finding induced obstacles in circular-arc graphs, we conclude that it is possible to find in linear time an induced essential obstacle in any circular-arc graph that is not a Helly circular-arc graph. The problem of finding a forbidden induced subgraph characterization, not restricted only to circular-arc graphs, for the class of Helly circular-arc graphs remains unresolved. As a partial answer to this problem, we find the minimal forbidden induced subgraph characterization for the class of Helly circular-arc graphs restricted to graphs containing no induced claw and no induced $5$-wheel. Furthermore, we show that there is a linear-time algorithm for finding, in any given graph that is not a Helly circular-arc graph, an induced subgraph isomorphic to claw, $5$-wheel, or some minimal forbidden induced subgraph for the class of Helly circular-arc graphs.}

\section{Introduction}

The \emph{intersection graph} of a set $\mathcal A$ of arcs on a circle is a graph having one vertex for each arc in $\mathcal A$ and such that two different vertices are adjacent if and only if the corresponding arcs have nonempty intersection. A graph $G$ is a \emph{circular-arc graph}~\cite{MR0309810} if $G$ is the intersection graph of some set $\mathcal A$ of arcs on a circle; if so, the set $\mathcal A$ is called a \emph{circular-arc model} of $G$. Forbidden structures for the class of circular-arc graphs and its main subclasses, as well as efficient algorithms for finding such structures, have received a great deal of attention~\cite{MR1271882,MR2536884,MR3575014,MR1773654,MR3191588,MR3451138,MR2071482,MR2765574,MR2554791,MR2428582,MR3030589,MR2399252,MR2538982,MR4043758,MR3318808,MR0450140,MR0379298}. A complete characterization by forbidden structures for the class of circular-arc graphs, together with an $O(n^3)$-time algorithm for finding one such forbidden structure in any given graph that is not a circular-arc graph, was given in~\cite{MR3451138}. Two surveys on structural results regarding circular-arc graphs appeared in~\cite{MR3159129,MR2567965}. Linear-time recognition algorithms for circular-arc graphs were proposed in~\cite{MR2283288,MR1993248}.

A family of sets has the \emph{Helly property}~\cite{MR0357172}, or simply is \emph{Helly}, if every nonempty subfamily of pairwise intersecting sets has nonempty total intersection. A \emph{Helly circular-arc graph} (sometimes also \emph{$\Theta$ circular-arc graph}) is a circular-arc graph admitting a circular-arc model that has the Helly property. These graphs were introduced by Gavril in~{\cite{MR0376439}, where he derived an $O(n^3)$-time recognition algorithm for them based on the circular-ones property for columns of their clique-matrices. Based on the same property, a linear-time algorithm for testing isomorphism of Helly circular-arc graphs was devised in~\cite{MR3040544}; a parallel algorithm~\cite{DBLP:journals/tpds/Chen96} and a logspace algorithm~\cite{MR3463413} for the same task are also known. Clique graphs of Helly circular-arc graphs were studied in \cite{MR2215017,MR1849458,MR2570626,MR2652002}. In~\cite{MR2765574}, Joeris, Lin, McConnell, Spinrad, and Szwarcfiter gave a linear-time recognition algorithm for Helly circular-arc graphs.\footnote{Some results of \cite{MR2765574} appeared also in the extended abstract~\cite{MR2304709}.} Moreover, if the input is a circular-arc graph, their algorithm is \emph{certifying}~\cite{MR2247731,mcconnell2011certifying}, meaning that it produces an easy-to-check certificate for the correctness of its answer. Namely, if the input is a Helly circular-arc graph, their algorithm answers `yes' together with a \emph{positive certificate}, which consists of a Helly circular-arc model of the input graph; otherwise, the answer is `no' together with a \emph{negative certificate}, which consists of an induced subgraph of the input graph that belongs to a family of graphs called \emph{obstacles}~\cite{MR2765574}. That an induced obstacle serves as a certificate of the `no' answer follows from the structural result below. The precise definition of obstacles is given in Section~\ref{sec:main}, while more basic definitions are given in Section~\ref{sec:defs}.

\begin{thm}[\cite{MR2765574}]\label{thm:HCAobst} A circular-arc graph $G$ is a Helly circular-arc graph if and only if $G$ contains no induced obstacle.\end{thm}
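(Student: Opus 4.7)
The plan is to prove the two implications separately, the forward one being essentially a verification and the backward one requiring a structural analysis of circular-arc models that fail the Helly property. In the forward direction, I assume $G$ admits a Helly circular-arc model $\mathcal{A}$ and argue that $G$ contains no induced obstacle. The natural way is to inspect each family in the (soon to be defined) catalog of obstacles and show that any of them, seen as a candidate circular-arc graph, would force three pairwise intersecting arcs with empty total intersection in every possible circular-arc model, contradicting the Helly hypothesis on $\mathcal{A}$. Since obstacles are defined by fixed clique patterns, this verification reduces to a routine, family-by-family check using the correspondence between arcs, their maximal cliques, and the round-ordering of the circle.

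For the backward direction I would start with a circular-arc model $\mathcal{A}$ of $G$, which exists since $G$ is a circular-arc graph. The idea is either to repair $\mathcal{A}$ into a Helly model or to read an induced obstacle off a local obstruction to being Helly. I would invoke the classical dictionary between Helly circular-arc models and the circular-ones property of the clique matrix: $G$ is Helly circular-arc iff its clique matrix admits a circular ordering of its columns in which the supports of each row form a contiguous arc. A non-Helly configuration then localizes to a small combinatorial witness, namely three pairwise intersecting arcs $A_1,A_2,A_3$ with empty common intersection together with the cliques separating them around the circle.

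The main step, and the part I expect to be the hardest, is to show that every such local obstruction actually extends to an induced obstacle in $G$ rather than to some arbitrary non-Helly triangle. To do this, I would enlarge the witness $\{A_1,A_2,A_3\}$ by a bounded number of arcs certifying that the three separating cliques are genuinely distinct in $G$ (not merely in the chosen model), and verify that the induced subgraph matches one of the obstacle types in the catalog. The combinatorial bookkeeping is the delicate part: one must rule out the possibility that the bad triangle can be absorbed by a different circular-arc model of the same $G$, which is done by exploiting the model-free notions (cliques, separators, and module structure) that appear in the definition of obstacles. Once this case analysis is complete, contrapositively $G$ contains an induced obstacle whenever it is a circular-arc graph that fails to be Helly, closing the equivalence.
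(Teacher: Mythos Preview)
The paper does not prove this theorem: it is quoted from~\cite{MR2765574} and used as a black box on which the paper's own results are built. There is therefore no proof in the paper to compare your proposal against.

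A few remarks on the proposal itself. Obstacles, as defined in Section~\ref{sec:main}, are not a finite catalog to be verified ``family by family'': an obstacle is any graph whose vertex set equals $V(\mathcal Q)\cup W(\mathcal Q)$ for some obstacle enumeration $\mathcal Q=v_1,\dots,v_k$ with $k\ge 3$ arbitrary and witness enumerations satisfying $(\mathcal O_1)$ or $(\mathcal O_2)$. The forward direction amounts to showing that in every circular-arc model of such a graph the core clique $Q$ has no clique point; the witnesses force the arcs of $Q$ to cover the entire circle. Framing this as ``three pairwise intersecting arcs with empty total intersection'' is misleading, since the core $Q$ may have any size $k\ge 3$, and the obstruction is to a clique point for $Q$, not merely to a Helly triple.

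For the converse, your plan to locate a non-Helly triple in a given model and enlarge it to an obstacle has the right spirit, but the step you yourself flag as hardest is exactly where all the content lies, and your sketch does not indicate how you would actually produce, for each consecutive pair $v_iv_{i+1}$ along the core, a witness satisfying $(\mathcal O_1)$ or $(\mathcal O_2)$. The appeal to ``module structure'' and the clique-matrix circular-ones route are not how the proof in~\cite{MR2765574} proceeds; that argument works directly on a (suitably normalized) circular-arc model and reads an obstacle enumeration off the geometry of a clique whose arcs cover the circle.
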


The above theorem gives a characterization of Helly circular-arc graphs by forbidden induced subgraphs restricted to circular-arc graphs. However, this characterization is not by minimal forbidden induced subgraphs. In fact, there are obstacles that contain other obstacles as induced subgraphs (e.g., $\overline{2P_4}$ and $\overline{2C_5}$ are obstacles such that the former is an induced subgraph of the latter~\cite{MR3232456}). Moreover, some obstacles are not circular-arc graphs (e.g., $\overline{C_6}$~\cite{MR3232456} and $\overline{C_5+K_2}$, where $+$ denotes disjoint union) and thus cannot occur as induced subgraphs of any circular-arc graph.

We say an obstacle is \emph{minimal} if it contains no induced obstacle having fewer vertices. A \emph{minimal circular-arc obstacle} is an obstacle that is both minimal and a circular-arc graph. Clearly, replacing `obstacle' by `minimal circular-arc obstacle' in Theorem~\ref{thm:HCAobst}, yields the characterization for the class of Helly circular-arc graphs by minimal forbidden induced subgraphs restricted to circular-arc graphs. A partial list of minimal circular-arc obstacles was given in~\cite{MR3232456}. In this work, we introduce essential obstacles, a refinement of the notion of obstacles, and prove that essential obstacles are precisely the minimal circular-arc obstacles or, equivalently, the minimal forbidden induced circular-arc subgraphs for the class of Helly circular-arc graphs, where by a  \emph{circular-arc subgraph} we mean a subgraph which is a circular-arc graph.

\begin{thm}\label{thm:main} The minimal forbidden induced circular-arc subgraphs for the class of Helly circular-arc graphs are precisely the essential obstacles.\end{thm}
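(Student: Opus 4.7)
By Theorem~\ref{thm:HCAobst} a circular-arc graph is Helly if and only if it contains no induced obstacle, so a circular-arc graph $G$ is a minimal forbidden induced circular-arc subgraph for the class of Helly circular-arc graphs exactly when $G$ is an obstacle, $G$ is a circular-arc graph, and every proper induced subgraph of $G$ either is Helly or fails to be a circular-arc graph. Since every induced subgraph of a circular-arc graph is again a circular-arc graph, the last condition simplifies to: no proper induced subgraph of $G$ is an obstacle. Thus the statement of the theorem is logically equivalent to the identity
\[
\{\text{essential obstacles}\} \;=\; \{\text{minimal circular-arc obstacles}\},
\]
and I would organize the proof as two inclusions, reducing everything to the definition of essential obstacle introduced in Section~\ref{sec:main}.

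For the inclusion $(\supseteq)$, I would start from a minimal circular-arc obstacle $G$ and show it matches the definition of essential obstacle. The obstacles of \cite{MR2765574} fall into a small number of named families, so this amounts to a case analysis: for each family, I would list the structural parameters (indices, vertex counts, adjacency patterns) on which the family depends, and then exhibit, for every parameter choice that does \emph{not} yield an essential obstacle, a strictly smaller induced obstacle inside $G$. The candidates are standard: deleting an apex-type vertex, contracting a long path segment, or substituting a shorter member of the same family. Each such deletion produces an induced subgraph that I would verify still belongs to some obstacle family, typically by exhibiting the required non-Helly configuration directly.

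For the inclusion $(\subseteq)$, I would take an essential obstacle $G$ and show that (i) $G$ is a circular-arc graph and (ii) $G$ is a minimal obstacle. Part (i) is established by exhibiting, for each family of essential obstacles, an explicit circular-arc model; this gives a finite number of model constructions parametrized by the family's indices. Part (ii) is the contrapositive of the argument used for $(\supseteq)$: one verifies that the constraints built into the definition of essential (tightness of indices, minimality of the apex/path substructures, etc.) prevent any proper induced subgraph from being an obstacle, and here the earlier case analysis can be reused in reverse by observing that any further vertex deletion either destroys the non-Helly core or drops the graph into a Helly subclass.

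The main obstacle will be the case analysis in $(\supseteq)$: obstacles include several infinite families indexed by integer parameters (the families denoted $\BII{\cdot}$ and $\BIII{\cdot}$ in the macros, together with the small sporadic obstacles such as $\overline{2P_4}$, $\overline{2C_5}$, $\overline{C_6}$, $\overline{C_5+K_2}$), and for each family one must identify precisely which parameter values give circular-arc graphs and, within those, which additionally admit no smaller induced obstacle. Keeping the bookkeeping uniform---ideally by showing that a single local reduction (removing one carefully chosen vertex) always works whenever the essentiality conditions are violated---will be what makes the proof manageable rather than a long enumeration.
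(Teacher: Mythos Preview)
Your reformulation in the first paragraph is fine, but the proof strategy that follows rests on a misunderstanding of the definitions. Obstacles in \cite{MR2765574} are \emph{not} given as a finite list of named parametrized families; an obstacle is any graph $G$ such that $V(G)=V(\mathcal Q)\cup W(\mathcal Q)$ for some obstacle enumeration $\mathcal Q$ (a clique $v_1,\ldots,v_k$ together with witness enumerations satisfying $(\mathcal O_1)$ or $(\mathcal O_2)$). Likewise, ``essential'' is a structural condition on the enumeration (every edge between witnesses is \emph{valid}), not a sublist of families. The graphs $\BII{\cdot}$ and $\BIII{\cdot}$ whose macros you noticed are unrelated: they appear only in Section~\ref{sec:quasi-line} as forbidden subgraphs for concave-round graphs. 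So the case analysis you propose---over ``a small number of named families''---has no object to analyze, and the plan as written cannot be carried out.

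The paper's argument works directly with the enumeration data. For the direction $(\supseteq)$ it classifies every edge between two witnesses of an obstacle enumeration as a \emph{shortcut}, a \emph{cover}, or \emph{valid} (Lemma~\ref{lem:noshort+nocover=essential}); a shortcut yields a strictly smaller induced obstacle (Lemma~\ref{lem:shortcut}), and a cover forces either an induced essential obstacle or one of six fixed graphs that are not circular-arc graphs (Lemma~\ref{lem:cover}). Iterating, any circular-arc obstacle contains an induced essential obstacle (Theorem~\ref{thm:essential}), so a minimal one must itself be essential. For $(\subseteq)$ the paper gives a single uniform circular-arc model built from the points $\ell_i,m_i,r_i$ attached to the enumeration (Lemma~\ref{lem:essential-HCA}), lists all maximal cliques other than the core together with clique points, and shows that removing any one vertex leaves a Helly model. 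There is no per-family construction; the same recipe works for every essential obstacle enumeration. If you want a ``single local reduction'' as you suggested at the end, the shortcut/cover machinery is exactly that---but it operates on obstacle enumerations, not on a catalogue of graph families.
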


Moreover, we show that, given any obstacle, it is possible to find in linear time a minimal forbidden induced subgraph for the class of Helly circular-arc graphs contained in it as an induced subgraph. Hence, given any negative certificate produced by Joeris et al.'s algorithm, it is possible to obtain a minimal negative certificate while preserving the linear time bound.

The problem of finding a forbidden induced subgraph characterization, not restricted only to circular-arc graphs, for the class of Helly circular-arc graphs remains unresolved; i.e., no analog of Theorem~\ref{thm:HCAobst} where `A circular-arc graph $G$' is replaced by just `A graph $G$' is known. As a partial answer to this problem, we obtain the minimal forbidden induced subgraph characterization for the class of Helly circular-arc graphs restricted to graphs containing no induced claw and no induced $5$-wheel, where the \emph{claw} is the complete bipartite graph $K_{1,3}$ and the \emph{$5$-wheel} is the graph that arises from a chordless cycle on $5$ vertices by adding one vertex adjacent to all vertices of the cycle. Moreover, we show that it is possible to find in linear time an induced claw, an induced $5$-wheel, or an induced minimal forbidden induced subgraph for the class of Helly circular-arc graphs in any given graph that is not a Helly circular-arc graph. Notice that although the minimal forbidden induced subgraph characterization for circular-arc graphs is known restricted to complements of bipartite graphs~\cite{MR0450140} and to claw-free chordal graphs~\cite{MR2536884}, no forbidden induced subgraph characterization for circular-arc graphs restricted to the larger class of graphs containing no induced claw and no induced $5$-wheel is known.

This work is organized as follows. In Section~\ref{sec:defs}, we give some definitions and preliminaries. In Section~\ref{sec:main}, we introduce essential obstacles, we prove that essential obstacles are precisely the minimal circular-arc obstacles, we show that it is possible to find in linear time, in any given obstacle, some minimal forbidden induced subgraph for the class of Helly circular-arc graphs contained as an induced subgraph, and we conclude that it is possible to find in linear time an induced essential obstacle in any given circular-arc graph that is not a Helly circular-arc graph. In Section~\ref{sec:quasi-line}, we give the minimal forbidden induced subgraph characterization of Helly circular-arc graphs restricted to graphs containing no induced claw and no induced $5$-wheel and show that it is possible to find in linear time, in any given graph that is not a Helly circular-arc graph, an induced subgraph isomorphic to claw, $5$-wheel, or some minimal forbidden induced subgraph for the class of Helly circular-arc graphs.

\section{Definitions and preliminaries}\label{sec:defs}

All graphs in this work are finite, undirected, and with no loops or multiple edges. For each positive integer $k$, we denote by $[k]$ the set $\{1,2,\ldots,k\}$. For any graph-theoretic notions not defined here, the reader is referred to~\cite{MR1367739}.

Let $G$ be a graph. We denote by $V(G)$ and $E(G)$ the vertex set and the edge set of $G$, respectively. If $v$ is vertex of $G$, the \emph{neighborhood} of $v$ in $G$, denoted $N_G(v)$, is the set of vertices of $G$ adjacent to $v$, whereas the \emph{closed neighborhood} of $v$ in $G$ is the set $N_G[v]=N_G(v)\cup\{v\}$. We denote by $\overline N_G(v)$ the set of vertices of $G$ different from $v$ and nonadjacent to $v$. The \emph{complement of $G$}, denoted $\overline G$, is the graph with the same vertex set as $G$ and such that two of its vertices are adjacent in $\overline G$ if and only if they are nonadjacent in $G$. Thus, $\overline N_G(v)=N_{\overline G}(v)$ for every vertex $v$ of $G$. If $X\subseteq V(G)$, the \emph{subgraph of $G$ induced by $X$}, denoted $G[X]$, is the graph having $X$ as vertex set and whose edges are those edges of $G$ having both endpoints in $X$. If $X\neq V(G)$, $G[X]$ is called a \emph{proper induced subgraph} of $G$. We say that \emph{$G$ contains an induced} (resp.\ \emph{contains a proper induced}) $H$ if $H$ is isomorphic to some induced subgraph (resp. proper induced subgraph) of $G$. If $W\subseteq V(G)$, we denote by $G-W$ the graph $G[V(G)-W]$. If $v\in V(G)$, we denote $G-\{v\}$ simply by $G-v$. If $\mathcal H$ is a set of graphs, we say that $G$ is {$\mathcal H$-free} if $G$ contains no induced $H$ for any graph $H$ in the set $\mathcal H$. If $H$ is a graph, we write \emph{$H$-free} to mean $\{H\}$-free. Let $\mathcal G$ be a \emph{hereditary graph class} (i.e., $\mathcal G$ is closed under taking induced subgraphs). A \emph{minimal forbidden induced subgraph for the class $\mathcal G$} is any graph $G$ that does not belong to $\mathcal G$ but such that every proper induced subgraph of $G$ belongs to $\mathcal G$. A \emph{clique} of $G$ is a set of pairwise adjacent vertices of $G$. We say a clique is \emph{maximal} to mean that it is inclusion-wise maximal. Two subsets $U$ and $W$ of $V(G)$ are \emph{complete} (resp.\ \emph{anticomplete}) if $U$ and $W$ are disjoint and each vertex of $U$ is adjacent (resp.\ nonadjacent) to each vertex of $W$. We say a vertex $v$ of $G$ is \emph{complete} (resp.\ \emph{anticomplete}) to a subset $W$ of $V(G)$ if $\{v\}$ is complete (resp.\ anticomplete) to $W$.

A \emph{chord} of a path (resp.\ a cycle) is an edge joining two nonconsecutive vertices of the path (resp.\ the cycle). A path or cycle is \emph{chordless} if it has no chord. If $n$ is a positive integer, we denote by $P_n$, $C_n$, and $K_n$, the chordless path, the chordless cycle, and the complete graph on $n$ vertices, respectively. We denote the complete bipartite graph with partite sets of sizes $k_1$ and $k_2$ by $K_{k_1,k_2}$. The \emph{$k$-wheel} is the graph that arises from $C_k$ by adding one vertex adjacent to all its vertices.

If $G$ and $H$ are two vertex-disjoint graphs, the \emph{disjoint union of $G$ and $H$}, denoted $G+H$, is the graph with vertex set $V(G)\cup V(H)$ and edge set $E(G)\cup E(H)$. If $G$ is a graph and $k$ is a nonnegative integer, we denote by $kG$ the disjoint union of $k$ graphs, each of which isomorphic to $G$. If $G$ is a graph, we denote by $G^*$ the graph $G+K_1$. The graphs $C_4^*$, $K_{2,3}$, domino, $G_3$, $\overline{C_6}$, and $\overline{C_5+K_2}$, which are depicted in Figure~\ref{fig:forb-HCA}, are some minimal forbidden induced subgraphs for the class of Helly circular-arc graphs. None of these six graphs is a circular-arc graph.

Let $\mathcal A$ be a circular-arc model on a circle $C$ of a graph $G$. If $Q$ is a clique of $G$, a \emph{clique point of $Q$ in $\mathcal A$} is any point of $C$ that belongs to all those arcs in $\mathcal A$ corresponding to vertices of $Q$. Hence, $\mathcal A$ is Helly if and only if each maximal clique of $G$ has a clique point in $\mathcal A$.

\begin{figure}[t!]
\ffigbox[\textwidth]{%
\ffigbox[\FBwidth]{%
\begin{subfloatrow}
\ffigbox[0.13\textwidth]{\includegraphics{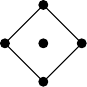}}{\caption{$C_4^*$}}
\ffigbox[0.13\textwidth]{\includegraphics{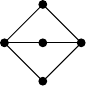}}{\caption{$K_{2,3}$}}
\ffigbox[0.13\textwidth]{\includegraphics{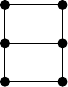}}{\caption{domino}}
\ffigbox[0.13\textwidth]{\includegraphics{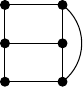}}{\caption{$G_3$}}
\ffigbox[0.13\textwidth]{\includegraphics{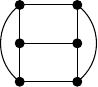}}{\caption{$\overline{C_6}$}}
\ffigbox[0.13\textwidth]{\includegraphics{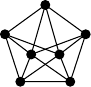}}{\caption{$\overline{C_5+K_2}$}}
\end{subfloatrow}}{}
}{\caption{Some minimal forbidden subgraphs for the class of Helly circular-arc graphs}\label{fig:forb-HCA}}
\end{figure}

If $\mathcal L$ is a circular or linear enumeration of vertices, we denote the set of vertices occurring in $\mathcal L$ by $V(\mathcal L)$. When discussing algorithms, we use $n$ and $m$ to denote the number of vertices and edges of the input graph, respectively. An algorithm taking a graph as input is \emph{linear-time} if it can be carried out in at most $O(n+m)$ time. We will also consider algorithms whose input is a circular-arc model. In such cases, we denote by $n$ the number of arcs in the model and we assume that the $2n$ extremes of these arcs are pairwise different and are given in the order in which they occur in some traversal of the circle.

We define a \emph{pseudo-domino} as any graph $D$ with vertex set $\{a_1,a_2,b_1,b_2,c_1,c_2\}$ such that $a_1a_2$, $b_1b_2$, $c_1c_2$, $a_1b_1$, $a_2b_2$, $b_1c_1$, and $b_2c_2$ are edges of $D$ and $a_1b_2$, $a_2b_1$, $b_1c_2$, and $b_2c_1$ are nonedges of $D$. We call the unordered pairs $a_1c_1$ and $a_2c_2$ the \emph{handles} of $D$ and the unordered pairs $a_1c_2$ and $a_2c_1$ the \emph{diagonals} of $D$. Notice that handles and diagonals may or may not be edges of $D$. The following lemma will be useful in the next section.

\begin{lem}\label{lem:pseudo-domino} If $D$ is a pseudo-domino, then one of the following assertions holds:
\begin{enumerate}[(i)]
 \item $D$ contains an induced $K_{2,3}$;
 \item $D$ is isomorphic to domino, $G_3$, or $\overline{C_6}$;
 \item both handles and at least one of the diagonals of $D$ are edges of $D$.
\end{enumerate}\end{lem}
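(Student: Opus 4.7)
The plan is to carry out a case analysis on which of the four \emph{free} unordered pairs --- the handles $h_1 = a_1c_1$, $h_2 = a_2c_2$ and the diagonals $d_1 = a_1c_2$, $d_2 = a_2c_1$ --- are edges of $D$. A preliminary observation I would reuse throughout is that the mandatory edges and non-edges of a pseudo-domino already pin down the neighborhoods $N_D(b_1) = \{a_1, b_2, c_1\}$ and $N_D(b_2) = \{a_2, b_1, c_2\}$, irrespective of the four free pairs. The permutation that swaps the indices $1 \leftrightarrow 2$ is a symmetry of the pseudo-domino set-up that exchanges $h_1 \leftrightarrow h_2$ and $d_1 \leftrightarrow d_2$, and I would use it to halve the case count.

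The first main case is that neither diagonal is an edge of $D$; the freedom then lies only in the two handles. If neither handle is an edge, $D$ has exactly the seven mandatory edges, so $D$ is the domino. If both handles are edges, the six non-edges of $D$ are precisely the edges of the 6-cycle $a_1\,b_2\,c_1\,a_2\,b_1\,c_2\,a_1$ in $\overline D$; since these are all the non-edges of $D$, the cycle is chordless and $D \cong \overline{C_6}$. If exactly one handle is an edge, say $h_1$ (the other option being symmetric), a direct comparison of the resulting 8-edge graph with $G_3$ from Figure~\ref{fig:forb-HCA} gives $D \cong G_3$. Conclusion (ii) holds in all three subcases.

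The second main case is that some diagonal is an edge, which by the index symmetry I may take to be $d_1 = a_1c_2$. If both handles are also edges, then both handles and the diagonal $d_1$ are edges, so (iii) holds. Otherwise at least one of $h_1, h_2$ is a non-edge, and I would exhibit an induced $K_{2,3}$ directly. When $h_2$ is the missing handle, the parts $\{a_1, b_2\}$ and $\{a_2, b_1, c_2\}$ work: $a_1b_2$, $a_2b_1$, $b_1c_2$ are mandatory non-edges and $a_2c_2 = h_2$ is a non-edge by hypothesis, yielding independence within each part, while the six cross-edges are supplied by the mandatory edges $a_1a_2, a_1b_1, a_2b_2, b_1b_2, b_2c_2$ together with $d_1 = a_1c_2$. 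When $h_1$ is the missing handle, the symmetric choice $\{b_1, c_2\}$ against $\{a_1, b_2, c_1\}$ works by the analogous check. Conclusion (i) holds.

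The only step that is not pure deduction from the pseudo-domino axioms is identifying the 8-edge graph arising in the first main case with the specific graph $G_3$ drawn in Figure~\ref{fig:forb-HCA}, which amounts to a direct visual comparison once the edge set is listed. Everything else is routine bookkeeping over the Boolean choices for $h_1, h_2, d_1, d_2$ modulo the index symmetry, with the two $K_{2,3}$ templates in the second case chosen so that they activate precisely when $d_1$ is an edge and some handle is a non-edge.
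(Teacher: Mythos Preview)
Your proof is correct and takes essentially the same approach as the paper: split on whether some diagonal is an edge, identifying the three graphs in (ii) when neither is, and otherwise exhibiting an induced $K_{2,3}$ on five vertices whenever a diagonal is present but a handle is missing. The only cosmetic difference is that the paper packages the $K_{2,3}$ step via an auxiliary ``pseudo-flag'' structure (the four induced subgraphs $D-a_1$, $D-a_2$, $D-c_1$, $D-c_2$), whereas you write out the two bipartitions explicitly; the resulting $K_{2,3}$'s are identical.
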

\begin{proof} We say a graph $P$ is a \emph{pseudo-flag} if $P$ has vertex set $\{a_1,a_2,b_1,b_2,c\}$, $a_1a_2$, $b_1b_2$, $a_1b_1$, $a_2b_2$, and $b_1c$ are edges of $P$ and $a_1b_2$, $a_2b_1$, and $b_2c$ are nonedges of $P$. The \emph{handle} of $P$ is the unordered pair $a_1c$ and the \emph{diagonal} of $P$ is the unordered pair $a_2c$. Clearly, either the handle of $P$ is an edge of $P$ whenever the diagonal of $P$ is an edge of $P$, or $P$ is isomorphic to $K_{2,3}$. Let $D$ be a pseudo-domino. As $D-a_1$, $D-a_2$, $D-c_1$, and $D-c_2$ are induced pseudo-flags of $D$, if at least one diagonal of $D$ is an edge of $D$, then either $D$ contains an induced $K_{2,3}$ or both handles of $D$ are edges of $D$. If, on the contrary, no diagonal of $D$ is an edge of $D$, then $D$ is isomorphic to domino, $G_3$, or $\overline{C_6}$.\end{proof}

\section{Essential obstacles}\label{sec:main}

An \emph{obstacle enumeration in a graph $G$} is a circular enumeration $\mathcal Q=v_1,v_2,\ldots,v_k$ of $k\geq 3$ pairwise different vertices such that $Q=\{v_1,\ldots,v_k\}$ is a clique of $G$ and, for each $i\in[k]$, a linear enumeration $\mathcal W_i$ consisting of one or two vertices of $G$ such that one of the following conditions holds:
\begin{enumerate}
 \item[$(\mathcal O_1)$] $\mathcal W_i=w_i$ where $\overline N_G(w_i)\cap Q=\{v_i,v_{i+1}\}$;
 \item[$(\mathcal O_2)$] $\mathcal W_i=u_i,z_i$ where $\overline N_G(u_i)\cap Q=\{v_i\}$, $\overline N_G(z_i)\cap Q=\{v_{i+1}\}$, and $u_iz_i\in E(G)$;
\end{enumerate}
where here, and henceforth, all subindices on $u_i$, $v_i$, $z_i$, $w_i$, and $\mathcal W_i$ are modulo $k$. (Recall the notation $\overline N_G(v)=N_{\overline G}(v)$ introduced in the preceding section.) The clique $Q$ is called the \emph{core of $\mathcal Q$}. For each $i\in[k]$, $\mathcal W_i$ is called the \emph{witness enumeration of $v_iv_{i+1}$ in $\mathcal Q$}. The linear enumerations $\mathcal W_1,\ldots,\mathcal W_k$ are called the \emph{witness enumerations of $\mathcal Q$} and the vertices in the set $W(\mathcal Q)=V(\mathcal W_1)\cup\cdots\cup V(\mathcal W_k)$ are called the \emph{witnesses of $\mathcal Q$}. Whenever we refer to an obstacle enumeration $\mathcal Q=v_1,v_2\ldots,v_k$, some specific witness enumeration for each of the edges $v_1v_2$, $v_2v_3$, $\ldots$, $v_kv_1$ is implicit. An \emph{obstacle}~\cite{MR2765574} is a graph $G$ such that $V(G)=V(\mathcal Q)\cup W(\mathcal Q)$ for some obstacle enumeration $\mathcal Q$ in $G$; if so, we will say that \emph{$\mathcal Q$ is an obstacle enumeration of $G$}.

A partial list of minimal circular-arc obstacles was given in~\cite{MR3232456}. For each $k\geq 3$, the \emph{complete $k$-sun}, denoted $S_k$, is the graph on $2k$ vertices $w_1,\ldots,w_k,v_1,\ldots,v_k$, where $\{w_1,\ldots,w_k\}$ is a clique and, for each $i\in[k]$, the only neighbors of $v_i$ are $w_{i-1}$ and $w_i$. The graphs $1$-pyramid, $2$-pyramid, and $U_4$ are depicted in Figure~\ref{fig:pyrs+U4}.

\begin{figure}[t!]
\ffigbox[\textwidth]{%
\ffigbox[\FBwidth]{%
\begin{subfloatrow}
\ffigbox[0.18\textwidth]{\includegraphics{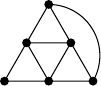}}{\caption{$1$-pyramid}}
\ffigbox[0.18\textwidth]{\includegraphics{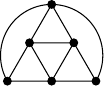}}{\caption{$2$-pyramid}}
\ffigbox[0.18\textwidth]{\includegraphics{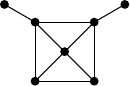}}{\caption{$U_4$}}
\end{subfloatrow}}{}
}{\caption{The graphs $1$-pyramid and $2$-pyramid and the minimal circular-arc obstacle $U_4$}\label{fig:pyrs+U4}}
\end{figure}

\begin{thm}[\cite{MR3232456}] The minimal circular-arc obstacles containing no induced $1$-pyramid and no induced $2$-pyramid are $\overline{3K_2}$, $U_4$, and $\overline{S_k}$ for each $k\geq 3$.\end{thm}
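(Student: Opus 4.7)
My plan is to prove both directions. For the easy direction, verify that each of $\overline{3K_2}$, $U_4$, and $\overline{S_k}$ ($k\geq 3$) is a minimal circular-arc obstacle containing no induced $1$-pyramid and no induced $2$-pyramid: for each candidate, exhibit an explicit obstacle enumeration, a circular-arc model, and a Helly circular-arc model of every proper induced subgraph (so that, by Theorem~\ref{thm:HCAobst}, no proper induced subgraph is a circular-arc obstacle), together with a direct inspection ruling out an induced $1$-pyramid or $2$-pyramid. For $\overline{S_k}$, the canonical enumeration has core $\{v_1,\ldots,v_k\}$ with singleton witnesses $w_i$ satisfying $\overline N(w_i)\cap Q=\{v_i,v_{i+1}\}$, so every $\mathcal W_i$ is of type $\mathcal O_1$; for $\overline{3K_2}$, every $\mathcal W_i$ is of type $\mathcal O_2$; and $U_4$ will correspond to a specific mixed configuration on a 3-element core.

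For the hard direction, let $G$ be a minimal circular-arc obstacle with no induced $1$-pyramid and no induced $2$-pyramid, and fix an obstacle enumeration $\mathcal Q = v_1, \ldots, v_k$ of $G$ with core $Q$ and witness enumerations $\mathcal W_1, \ldots, \mathcal W_k$. Let $t$ denote the number of indices $i$ for which $\mathcal W_i$ is of type $\mathcal O_2$. The argument splits into three cases according to $t$ and $k$.

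If $t = 0$, then each $\mathcal W_i = w_i$ satisfies $\overline N_G(w_i) \cap Q = \{v_i, v_{i+1}\}$, which is exactly the core–witness adjacency pattern of $\overline{S_k}$. The constraints on the sets $\overline N_G(w_i) \cap Q$ immediately force the $w_i$ to be pairwise distinct (two witnesses sharing an index would leave some $v_j$ with too few non-neighbors among the witnesses), and minimality rules out any vertex outside $Q \cup \{w_1, \ldots, w_k\}$, yielding $G \cong \overline{S_k}$. If $t \geq 1$ and $k \geq 4$, pick an index $i$ with $\mathcal W_i = u_i, z_i$ of type $\mathcal O_2$ and, together with suitably chosen witnesses for the adjacent edges $v_{i-1}v_i$ and $v_{i+1}v_{i+2}$, isolate a pseudo-domino on six vertices inside $G$; Lemma~\ref{lem:pseudo-domino} then yields an induced $K_{2,3}$, domino, $G_3$, or $\overline{C_6}$, and adjoining one further core vertex $v_j$ (available because $k\geq 4$) to this copy produces an induced $1$-pyramid or $2$-pyramid, contradicting the hypothesis, so this case cannot occur. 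If $t \geq 1$ and $k = 3$, a finite case analysis over how many of the three witness enumerations are of type $\mathcal O_2$ and over the possible identifications among vertices in $W(\mathcal Q)$, combined with the pseudo-domino lemma applied to consecutive pairs, the minimality of $G$, and the no-$1$-pyramid, no-$2$-pyramid hypothesis, narrows the possibilities to exactly $\overline{3K_2}$ (when all three witness enumerations are of type $\mathcal O_2$) and $U_4$ (for one specific mixed configuration).

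The main obstacle will be the $t \geq 1$, $k \geq 4$ case: one needs a careful choice of six vertices from $Q \cup W(\mathcal Q)$ that assembles into a pseudo-domino with prescribed handles and diagonals, and then a verification that each of the four possible outputs of Lemma~\ref{lem:pseudo-domino} embeds in $G$ in a way whose union with some unused core vertex is guaranteed to yield an induced $1$-pyramid or $2$-pyramid. The cases $k=3$, $t\geq 1$ require a bookkeeping of the few possible pairwise identifications among the (at most six) witness vertices, but no deep new idea beyond repeated application of the pseudo-domino lemma and the minimality of $G$.
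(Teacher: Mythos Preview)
The paper does not prove this theorem: it is quoted verbatim as a result of~\cite{MR3232456} and no proof (or even proof sketch) appears in the present paper, so there is nothing to compare your proposal against here.

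That said, your outline has a real gap in the $t=0$ case. Knowing that every $\mathcal W_i$ is of type~$\mathcal O_1$ fixes the adjacencies between $Q$ and the witnesses, but an obstacle places \emph{no} restriction on the edges among the witnesses $w_1,\ldots,w_k$; in $\overline{S_k}$ these witnesses form a stable set, and you have not argued why any edge $w_iw_j$ is impossible. Minimality alone does not rule such edges out (adding an edge does not create a smaller obstacle as an induced subgraph in any obvious way), so you must actually invoke the hypothesis that $G$ has no induced $1$-pyramid and no induced $2$-pyramid at this point---this is where those two graphs enter, not only in the $t\geq 1$ cases. Similarly, in the $k\geq 4$, $t\geq 1$ case your plan to ``adjoin one further core vertex'' to a six-vertex output of Lemma~\ref{lem:pseudo-domino} and obtain an induced $1$- or $2$-pyramid is asserted rather than checked; since that lemma can also output assertion~(iii) (both handles and a diagonal are edges), you need to say what happens then, and in the other outputs you need to verify the extra core vertex has exactly the adjacencies required by the target graph.
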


In this section, we will give a precise description of all minimal circular-arc obstacles. For that purpose, we need some specific definitions. Let $\mathcal Q$ be an obstacle enumeration in a graph $G$ and let $Q=V(\mathcal Q)$. For each witness $y$ of $\mathcal Q$, let $\ell_{\mathcal Q}(y)$ and $r_{\mathcal Q}(y)$ denote the vertices of $\mathcal Q$ such that $\overline N_G(y)\cap Q=\{\ell_{\mathcal Q}(y),r_{\mathcal Q}(y)\}$ and either $\ell_{\mathcal Q}(y)=r_{\mathcal Q}(y)$ or $r_{\mathcal Q}(y)$ occurs immediately after $\ell_{\mathcal Q}(y)$ in $\mathcal Q$. We will usually denote $\ell_{\mathcal Q}(y)$ and $r_{\mathcal Q}(y)$ simply by $\ell(y)$ and $r(y)$, respectively. Given an edge $y_1y_2$ of $G$ joining two witnesses $y_1$ and $y_2$ of $\mathcal Q$:
\begin{itemize}
 \item We say the ordered pair $(y_1,y_2)$ is an \emph{inner-shortcut pair} if $\ell(y_1)\notin\overline N_G(y_2)$, $r(y_2)\notin\overline N_G(y_1)$, and $\ell(y_1)$ and $r(y_2)$ are nonconsecutive in $\mathcal Q$. We say the edge $y_1y_2$ is an \emph{inner shortcut of $\mathcal Q$} if at least one of $(y_1,y_2)$ and $(y_2,y_1)$ is an inner-shortcut pair.
 
 \item We say the edge $y_1y_2$ is an \emph{outer shortcut} if there are two vertices $q_1$ and $q_2$ that occur consecutively in $\mathcal Q$ such that $\overline N_G(y_1)\cap Q=\{q_1\}$ and $\overline N_G(y_2)\cap Q=\{q_2\}$ but $y_1$ and $y_2$ do not occur together in any witness enumeration of $\mathcal Q$.
 
 \item We say the edge $y_1y_2$ is a \emph{shortcut} if it is an inner shortcut or an outer shortcut.
 
 \item We say the edge $y_1y_2$ is a \emph{cover} if $\overline N_G(y_1)\cup\overline N_G(y_2)\supseteq Q$.
 
 \item We say the edge $y_1y_2$ is \emph{valid} if either $\overline N_G(y_1)\cap Q$ and $\overline N_G(y_2)\cap Q$  are comparable (i.e., one is a subset of the other) or $y_1$ and $y_2$ occur together in some witness enumeration of $\mathcal Q$.
 
 Roughly speaking, if the witnesses of $\mathcal Q$ are labeled as in the definition of obstacles, then the edge $y_1y_2$ is valid if and only if $y_1y_2$ equals $z_{i-1}w_i$, $w_{i-1}u_i$, $z_{i-1}u_i$, or $u_iz_i$ for some $i\in[k]$.
\end{itemize}
Clearly, at most one of the following assertions holds: (i) $y_1y_2$ is an inner shortcut; (ii) $y_1y_2$ is an outer shortcut; (iii) $y_1y_2$ is a cover; (iv) $y_1y_2$ is valid. We will prove later on (Lemma~\ref{lem:noshort+nocover=essential}) that, actually, one of these assertions does hold.

We say an obstacle enumeration $\mathcal Q$ is \emph{essential} if every edge joining two of its witnesses is valid. An obstacle $G$ is \emph{essential} if there is an essential obstacle enumeration of $G$. In this section, we will prove that essential obstacles are precisely the minimal forbidden circular-arc subgraphs for the class of Helly circular-arc graphs. 

We first observe that shortcuts are not possible in obstacle enumerations of minimal obstacles.

\begin{lem}\label{lem:shortcut} Let $G$ be an obstacle and let $\mathcal Q$ be an obstacle enumeration of $G$. If $\mathcal Q$ has a shortcut, then $G$ is not a minimal obstacle.\end{lem}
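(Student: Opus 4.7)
The plan is to show that the presence of a shortcut allows one to build an obstacle enumeration $\mathcal Q'$ in $G$ whose vertex set $V'=V(\mathcal Q')\cup W(\mathcal Q')$ is strictly contained in $V(G)$; then $G[V']$ is an induced obstacle on fewer vertices, so $G$ cannot be a minimal obstacle. In both cases the idea is to let $(y_1,y_2)$ itself serve as a new $\mathcal O_2$-witness for a suitable edge of $\mathcal Q'$; the main thing to verify is routine bookkeeping, namely that the retained witnesses still have their correct non-neighborhoods in the shrunken core and that $(y_1,y_2)$ meets the $\mathcal O_2$-conditions with respect to $Q'$.

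Suppose first that $(y_1,y_2)$ is an inner-shortcut pair, with $\ell(y_1)=v_a$ and $r(y_2)=v_b$ nonconsecutive in $\mathcal Q$. The plan is to contract the arc strictly between $v_a$ and $v_b$: let $Q'$ be the complementary arc $v_b,v_{b+1},\ldots,v_a$, which has at least three vertices by the nonconsecutive hypothesis and thus inherits a cyclic enumeration $\mathcal Q'$. For every edge of $\mathcal Q'$ that was already an edge of $\mathcal Q$ I keep the original witness enumeration $\mathcal W_j$; for the new edge $v_av_b$ I declare $(y_1,y_2)$ to be an $\mathcal O_2$-witness. The pair satisfies the $\mathcal O_2$-conditions because $\overline N_G(y_1)\cap Q$ equals either $\{v_a\}$ or $\{v_a,v_{a+1}\}$ and $v_{a+1}\notin Q'$, giving $\overline N_G(y_1)\cap Q'=\{v_a\}$; symmetrically $\overline N_G(y_2)\cap Q'=\{v_b\}$; and $y_1y_2\in E(G)$ by assumption. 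Since $v_{a+1}\in V(G)\setminus V'$, the resulting induced obstacle is strictly smaller than $G$.

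Suppose instead that $y_1y_2$ is an outer shortcut with $\overline N_G(y_1)\cap Q=\{v_i\}$ and $\overline N_G(y_2)\cap Q=\{v_{i+1}\}$. The plan is to keep the same core $Q$ but replace the enumeration $\mathcal W_i$ by the pair $(y_1,y_2)$; the required $\mathcal O_2$-conditions are precisely the hypotheses of the outer shortcut, so $\mathcal Q'$ is an obstacle enumeration. The remaining check is that $V(\mathcal W_i)$ contains a vertex outside $\{y_1,y_2\}$: if $\mathcal W_i=w_i$, then $w_i$ has two non-neighbors in $Q$ while $y_1$ and $y_2$ each have only one, so $w_i\notin\{y_1,y_2\}$; if $\mathcal W_i=u_i,z_i$, then by the defining property of an outer shortcut $y_1$ and $y_2$ do not both belong to $V(\mathcal W_i)=\{u_i,z_i\}$, and since $y_1\neq y_2$ this forces $V(\mathcal W_i)\not\subseteq\{y_1,y_2\}$. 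Thus $V'\subsetneq V(G)$ in either case, and $G[V']$ is the desired strictly smaller induced obstacle, showing that $G$ is not a minimal obstacle.
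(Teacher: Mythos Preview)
Your inner-shortcut argument is correct and is exactly the paper's construction (up to relabelling).

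In the outer-shortcut case there is a genuine gap. You correctly exhibit a vertex $y\in V(\mathcal W_i)\setminus\{y_1,y_2\}$, but you then jump to $V'\subsetneq V(G)$ without verifying that $y$ is actually absent from $V'=Q\cup W(\mathcal Q')$. The definition of obstacle does not forbid the same vertex from appearing in two adjacent witness enumerations: for instance, if $y=u_i$ (so $\overline N_G(y)\cap Q=\{v_i\}$) and $\mathcal W_{i-1}=u_{i-1},z_{i-1}$ with $z_{i-1}=u_i$, then after you replace $\mathcal W_i$ by $(y_1,y_2)$ the vertex $y$ is still in $W(\mathcal Q')$ via $\mathcal W_{i-1}$, and nothing has been removed. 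Your proof as written does not rule this out.

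The paper closes this gap by the following observation: the only witnesses of $\mathcal Q$ whose non-neighbourhood in $Q$ is exactly $\{v_i\}$ are $u_i$ and possibly $z_{i-1}$; if these coincide, then $y$ is the \emph{unique} witness with $\overline N_G(\cdot)\cap Q=\{v_i\}$. But $y_1$ is also a witness with $\overline N_G(y_1)\cap Q=\{v_i\}$, so $y_1=y$, contradicting $y\notin\{y_1,y_2\}$. The symmetric argument handles $y=z_i=u_{i+1}$. Once you add this step, your proof is complete and matches the paper's.
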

\begin{proof} Suppose first that $y_1y_2$ is an inner shortcut of $\mathcal Q$, where $(y_1,y_2)$ is an inner-shortcut pair. Let $\mathcal Q=v_1,v_2,\ldots,v_k$ so that $r(y_2)=v_1$. Let $j\in[k]$ such that $\ell(y_1)=v_j$. As $\ell(y_1)$ is not consecutive to $r(y_2)$ in $\mathcal Q$, $j\geq 3$ and $j<k$. Hence, $\mathcal Q'=v_1,v_2,\ldots,v_j$ is an obstacle enumeration of some induced subgraph of $G-\{v_{j+1},\ldots,v_k\}$, with the same witness enumeration as in $\mathcal Q$ for each edge $v_iv_{i+1}$ such that $i\in[j-1]$ and the witness enumeration $y_1,y_2$ for the edge $v_jv_1$. This proves that some induced subgraph of $G-\{v_{j+1},\ldots,v_k\}$ is an obstacle. In particular, $G$ is not a minimal obstacle.

Suppose now that $y_1y_2$ is an outer shortcut of $\mathcal Q$. Let $\mathcal Q=v_1,v_2,\ldots,v_k$ so that $\overline N_G(y_1)\cap Q=\{v_1\}$ and $\overline N_G(y_2)\cap Q=\{v_2\}$. Let $\mathcal W_1$ be the witness enumeration of the edge $v_1v_2$ in $\mathcal Q$. As $y_1$ and $y_2$ do not occur together in any witness enumeration of $\mathcal Q$, there is at least one vertex $y\in V(\mathcal W_1)-\{y_1,y_2\}$. (Notice that if $\mathcal W_1$ consists of just one vertex, then this vertex has two nonneighbors in $Q$ and, in particular, is different from $y_1$ and $y_2$.) We replace $\mathcal W_1$ by $\mathcal W'_1=y_1,y_2$. Suppose, for a contradiction, that $y$ remains a witness of $\mathcal Q$. Thus, either (i) the witness enumeration of $v_kv_1$ is $u_k,y$ for some vertex $u_k$, or (ii) the witness enumeration of $v_2v_3$ is $y,z_2$ for some vertex $z_2$. However, if (i) holds, then necessarily $\mathcal W_1=y,z_1$ for some vertex $z_1$ and $y$ would be the only witness of the original $\mathcal Q$ such that $\overline N_G(y)=\{v_1\}$, contradicting $y\neq y_1$. Similarly, if (ii) holds, then necessarily $\mathcal W_1=u_1,y$ for some vertex $u_1$ and $y$ would be the only witness of the original $\mathcal Q$ such that $\overline N_G(y)=\{v_2\}$, contradicting $y\neq y_2$. These contradictions show that the $G-y$ contains an induced obstacle. In particular, $G$ is not a minimal obstacle.\end{proof}

Our next result deals with obstacles enumerations having a cover.

\begin{lem}\label{lem:cover} Let $G$ be an obstacle and let $\mathcal Q$ be an obstacle enumeration of $\mathcal Q$. If $\mathcal Q$ has a cover, then either $G$ contains an induced essential obstacle or $G$ contains one of the following minimal forbidden induced subgraphs for the class of Helly circular-arc graphs as an induced subgraph: $C_4^*$, $K_{2,3}$, domino, $G_3$, $\overline{C_6}$, and $\overline{C_5+K_2}$.\end{lem}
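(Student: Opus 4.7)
The plan is a case analysis driven by the following immediate observation: since each witness of $\mathcal Q$ has at most two nonneighbors in $Q$, the cover condition $\overline N_G(y_1)\cup\overline N_G(y_2)\supseteq Q$ forces $k=|Q|\in\{3,4\}$. The cases $k=3$ and $k=4$ further subdivide according to whether $y_1$ and $y_2$ are of type $\mathcal O_1$ (contributing two nonneighbors in $Q$) or of type $\mathcal O_2$ (contributing one). For $k=4$, both witnesses must be of type $\mathcal O_1$ with their nonneighbor sets partitioning $Q$; for $k=3$, either both are of type $\mathcal O_1$ (with overlapping nonneighbor pairs) or one is of each type.

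For each such configuration I would assemble a 5- or 6-vertex induced subgraph of $G$ formed by $Q$, the cover pair $\{y_1,y_2\}$, and any $\mathcal O_2$-partner vertices $u_i,z_i$ that accompany $y_1$ or $y_2$. When the resulting six vertices fit the pseudo-domino edge/nonedge pattern I would invoke Lemma~\ref{lem:pseudo-domino}: its conclusions (i) and (ii) immediately produce an induced $K_{2,3}$, domino, $G_3$, or $\overline{C_6}$ in $G$, all of which are listed. For case (iii), I would bring in the witnesses of $\mathcal Q$ for the remaining edges of the core (for instance, the witnesses of $v_2v_3$ and $v_4v_1$ when $k=4$), apply Lemma~\ref{lem:shortcut} to discard those subcases in which the resulting edges turn out to be shortcuts, and extract from the residual witness-to-witness adjacencies either an induced $C_4^*$, an induced $\overline{C_5+K_2}$, or an induced essential obstacle living on a subset of $V(G)$.

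The main obstacle is the $k=4$ case with both $y_1$ and $y_2$ of type $\mathcal O_1$: the six vertices $\{v_1,v_2,v_3,v_4,y_1,y_2\}$ already form a pseudo-domino, but its handles and both diagonals lie inside the clique $Q$ and are therefore edges, so conclusion (iii) of Lemma~\ref{lem:pseudo-domino} is automatic and yields no immediate forbidden subgraph. Extracting the required conclusion in this case requires analysing extra witnesses, their types, and their mutual adjacencies in detail, and is where the bulk of the bookkeeping lies; the $k=3$ subcases follow the same template but with fewer configurations to check.
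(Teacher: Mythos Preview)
Your case decomposition (forcing $k\in\{3,4\}$ and splitting on the types of $y_1,y_2$) and your use of Lemma~\ref{lem:pseudo-domino} match the paper's approach. The genuine gap is your appeal to Lemma~\ref{lem:shortcut}. That lemma only tells you that $G$ contains a \emph{proper induced obstacle}; it does not produce an essential obstacle, and the smaller obstacle it yields need not have a cover, so you cannot re-enter the present lemma inductively. Invoking it therefore does not ``discard'' any subcase in the sense you need: the conclusion of Lemma~\ref{lem:cover} demands an \emph{essential} obstacle or one of the six listed graphs, and a mere shrinking step delivers neither.

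The paper never uses Lemma~\ref{lem:shortcut} here. Instead, in every terminal subcase where the pseudo-domino lemma returns conclusion~(iii) (and in the $k=4$ subcase where the witness of $v_1v_2$ is a single vertex $w_1$), it writes down by hand an explicit obstacle enumeration on three or four named vertices, with explicit witness enumerations, and checks directly that this enumeration is essential. That explicit construction is the actual substance of the proof and is what your proposal leaves out. For example, in the $k=4$ case with $\mathcal W_1=u_1,z_1$, after the pseudo-domino on $\{u_1,z_1,v_1,v_2,w_4,w_2\}$ forces the handle and diagonal edges, the paper exhibits an essential obstacle enumeration on either $\{u_1,z_1,v_3\}$ or $\{u_1,v_3,v_4,z_1\}$ depending on whether $w_4z_1$ is an edge; no shortcut reasoning is involved. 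Your plan is salvageable, but only by replacing the shortcut step with these direct constructions.
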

\begin{proof} Suppose that the edge $y_1y_2$ is a cover of $\mathcal Q$. Let $\mathcal Q=v_1,v_2,\ldots,v_k$ and let $Q=V(\mathcal Q)$. As $y_1y_2$ is a cover, $\overline N_G(y_1)\cup\overline N_G(y_2)\supseteq Q$. Hence, $k\in\{3,4\}$ and, without loss of generality, $\overline N_G(y_1)\cap Q=\{v_2,v_3\}$. All along this proof, we will refer to vertex $y_1$ as $w_2$.

We consider all possible cases up to symmetry.
\begin{enumerate}[{Case }1:]
 \item\label{case1} \emph{$k=3$ and $\overline N_G(y_2)\cap Q=\{v_3,v_1\}$.} All along this case, we refer to $y_2$ as $w_3$. If the witness enumeration of $v_1v_2$ is $w_1$ for some vertex $w_1$, then $\{v_1,w_2,w_3,v_2,w_1\}$ induces $C_4^*$ or $\{v_1,v_2,v_3,w_1,w_2,w_3\}$ induces $G_3$ or $\overline{C_6}$ in $G$ depending on the number of edges of the subgraph of $G$ induced by $\{w_1,w_2,w_3\}$. Hence, we assume, without loss of generality, that the witness enumeration of $v_1v_2$ is $u_1,z_1$ for some vertices $u_1$ and $z_1$. Thus, $\{u_1,z_1,v_1,v_2,w_3,w_2\}$ induces a pseudo-domino $D$ in $G$. Because of Lemma~\ref{lem:pseudo-domino}, either $G$ contains an induced $K_{2,3}$, domino, $G_3$, or $\overline{C_6}$, or $u_1w_3$, $z_1w_2$, and at least one of the unordered pairs $u_1w_2$ and $z_1w_3$ are edges of $G$. Hence, we assume, without loss of generality, that $u_1w_3$ and $z_1w_2$ are edges of $G$ and, by symmetry, $u_1w_2$ is also an edge of $G$. Therefore, $v_3,u_1,z_1$ is an obstacle enumeration of $G$ with witness enumerations $w_2,v_1$ and $v_1,v_2$ for the edges $v_3u_1$ and $u_1z_1$, respectively, and witness enumeration $v_2,w_3$ or $w_3$ for the edge $z_1v_3$ depending on whether or not $w_3$ is adjacent to $z_1$ in $G$, respectively.
 
 \item \emph{$k=3$ and $\overline N_G(y_2)\cap Q=\{v_1\}$.} By symmetry, we assume, without loss of generality, that the witness enumeration of $v_1v_2$ is $u_1,z_1$, where $y_2=u_1$ and $z_1$ is some vertex. Without loss of generality, $w_2$ is adjacent to $z_1$, since otherwise $\{u_1,z_1,v_1,v_2,w_2\}$ induces $K_{2,3}$ in $G$. Suppose first that the witness enumeration of $v_3v_1$ is $w_3$ for some vertex $w_3$. If $w_2$ were adjacent to $w_3$, then we are in Case~\ref{case1}. Thus, without loss of generality, we assume that $w_2$ is nonadjacent to $w_3$. 
 Also without loss of generality, $u_1w_3$ is an edge of $G$, since otherwise $\{v_1,v_3,u_1,w_2,w_3\}$ induces $C_4^*$ in $G$. It turns out that the obstacle enumeration in the last sentence of Case~\ref{case1} (including the same witness enumerations) is an essential obstacle enumeration of $G$. Suppose now that the witness enumeration of $v_3v_1$ is $u_3,z_3$ for some vertices $u_3$ and $z_3$ (eventually $z_3=u_1$). If $u_3$ is nonadjacent to $u_1$, then the obstacle enumeration $z_1,v_1,v_3$ with witnesses enumerations $v_2,u_1$ and $u_1,w_2$ for the edges $z_1v_1$ and $v_1v_3$, respectively, and witness enumeration $u_3,v_2$ or $u_3$ depending on whether or not $u_3$ is adjacent to $z_1$, respectively, is an essential obstacle enumeration of the subgraph of $G$ induced by $\{v_1,v_2,v_3,u_1,z_1,w_2,u_3\}$. Hence, we assume, without loss of generality, that $u_3$ is adjacent to $u_1$. Moreover, without loss of generality, $u_3$ is adjacent to $w_2$, since otherwise $\{v_1,v_3,u_1,w_2,u_3\}$ induces $K_{2,3}$ in $G$. 
 Furthermore, without loss of generality, $u_3$ is adjacent to $z_1$, since otherwise $\{w_2,u_3,v_2,v_3,z_1,v_1,u_1\}$ induces $\overline{C_5+K_2}$ in $G$. Thus, the obstacle enumeration $v_1,v_2,v_3$ with witness enumerations $u_1,z_1$, $z_1,u_3$, and $u_3,u_1$ for the edges $v_1v_2$, $v_2v_3$, and $v_3v_1$, respectively, is an essential obstacle enumeration of the subgraph of $G$ induced by $\{v_1,v_2,v_3,u_1,z_1,u_3\}$. 

 \item\label{case3} \emph{$k\geq 4$}. As $\overline N_G(y_1)\cup\overline N_G(y_2)\supseteq Q$, necessarily $k=4$ and $\overline N_G(y_2)\cap Q=\{v_4,v_1\}$. We refer to $y_2$ as $w_4$. Suppose first that the witness enumeration of $v_1v_2$ in $\mathcal Q$ is $u_1,z_1$ for some vertices $u_1$ and $z_1$. Thus, $\{u_1,z_1,v_1,v_2,w_4,w_2\}$ induces a pseudo-domino in $G$. By virtue of Lemma~\ref{lem:pseudo-domino}, we assume, without loss of generality, that $u_1w_4$ and $z_1w_2$ are edges of $G$ and, by symmetry, $w_2u_1$ is also an edge of $G$. On the one hand, if $w_4$ is nonadjacent to $z_1$, then the obstacle enumeration $u_1,z_1,v_3$ with witness enumerations $v_1,v_2$, $w_4,w_2$, and $w_2,v_1$ for the edges $u_1z_1$, $z_1v_3$, and $v_3u_1$, respectively, is an essential obstacle enumeration of the subgraph of $G$ induced by $\{v_1,v_2,v_3,u_1,z_1,w_2,w_4\}$. On the other hand, if $w_4$ is adjacent to $z_1$, then the obstacle enumeration $u_1,v_3,v_4,z_1$ with witness enumerations $v_1,w_2$, $w_2,w_4$, $w_4,v_2$, and $v_2,v_1$ for the edges $u_1u_3$, $v_3v_4$, $v_4z_1$, and $z_1u_1$, respectively, is an essential obstacle enumeration of the subgraph of $G$ induced by $\{v_1,v_2,v_3,v_4,u_1,z_1,w_2,w_4\}$. This completes the proof in case the witness enumeration of $v_1v_2$ in $\mathcal Q$ is $u_1,z_1$ for some vertices $u_1$ and $z_1$. Hence, we assume, without loss of generality, that the witness enumeration of $v_1v_2$ in $\mathcal Q$ is $w_1$ for some vertex $w_1$. Without loss of generality, $w_1$ is adjacent to at least one of $w_2$ and $w_4$ since otherwise $\{v_1,v_2,w_2,w_4,w_1\}$ induces $C_4^*$ in $G$. If $w_1$ is adjacent to $w_2$ but nonadjacent to $w_4$, then $\{v_1,v_3,w_1,w_2,w_4\}$ induces $K_{2,3}$ in $G$. Symmetrically, if $w_1$ is adjacent to $w_4$ but nonadjacent to $w_2$, then $\{v_2,v_4,w_1,w_2,w_4\}$ induces $K_{2,3}$ in $G$. Finally, if $w_1$ is adjacent to both $w_2$ and $w_4$, then the obstacle enumeration $w_1,v_3,v_4$ with witness enumerations $v_1,w_2$, $w_2,w_4$, and $w_4,v_2$ for the edges $w_1v_3$, $v_3v_4$, and $v_4w_1$, respectively, is an essential obstacle enumeration of the subgraph of $G$ induced by $\{v_1,v_2,v_3,v_4,w_1,w_2,w_4\}$.\qedhere
\end{enumerate}\end{proof}

We now show that shortcuts and covers are the only faults that prevent an obstacle enumeration from being essential.

\begin{lem}\label{lem:noshort+nocover=essential} Let $G$ be an obstacle and let $\mathcal Q$ be an obstacle enumeration of $G$. If an edge $e$ of $G$ joining two witnesses of $\mathcal Q$ is neither a shortcut nor a cover, then $e$ is valid. Therefore, if $\mathcal Q$ has no shortcut and no cover, then $\mathcal Q$ is an essential obstacle enumeration and $G$ is an essential obstacle.\end{lem}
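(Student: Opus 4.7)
The plan is to prove the first statement edge-by-edge: assuming $e=y_1y_2$ is neither an inner shortcut, outer shortcut, nor cover, I would show $e$ is valid. The second sentence then follows immediately, since essentiality is by definition the validity of every edge joining two witnesses.

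I would set $A_i=\overline N_G(y_i)\cap Q$ for $i=1,2$. By the definition of an obstacle enumeration, each $A_i$ is either a singleton $\{v_{a_i}\}$ (when $y_i$ plays a $u_j$- or $z_j$-role) or a pair $\{v_{a_i},v_{a_i+1}\}$ of consecutive vertices in $\mathcal Q$ (when $y_i$ plays a $w_j$-role). If $A_1$ and $A_2$ are comparable, or if $y_1$ and $y_2$ occur together in some witness enumeration, then $e$ is already valid; so I assume the contrary, and the task reduces to producing a shortcut or a cover.

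The argument then splits into three cases according to $(|A_1|,|A_2|)$. In the case $(2,2)$, write $A_1=\{v_a,v_{a+1}\}$ and $A_2=\{v_b,v_{b+1}\}$ with $b\not\equiv a\pmod k$; unpacking the three defining clauses of ``inner-shortcut pair'' in both orders shows that $(y_1,y_2)$ is one iff $b\notin\{a-2,a-1,a\}\pmod k$ and $(y_2,y_1)$ is one iff $b\notin\{a,a+1,a+2\}\pmod k$, so both fail simultaneously only when $b$ lies in $\{a-2,a-1\}\cap\{a+1,a+2\}\pmod k$, which forces $k\in\{3,4\}$ and, by inspection of each such configuration, $A_1\cup A_2=Q$, so $e$ is a cover. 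In the mixed case $(2,1)$ (and $(1,2)$ by symmetry), write $A_1=\{v_a,v_{a+1}\}$ and $A_2=\{v_b\}$ with $b\notin\{a,a+1\}$; the analogous check shows both directions fail to be inner-shortcut pairs only when $k=3$ and $A_1\cup A_2=Q$, again a cover. Finally, in the case $(1,1)$, writing $A_i=\{v_{a_i}\}$ with $a_1\neq a_2$, the pair $(y_1,y_2)$ is an inner-shortcut pair whenever $v_{a_1}$ and $v_{a_2}$ are nonconsecutive in $\mathcal Q$; if instead they are consecutive, then the standing assumption that $y_1,y_2$ do not occur together in any witness enumeration makes $e$ an outer shortcut by definition.

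The main obstacle is the modular bookkeeping: one must locate precisely the configurations in which both directions of the inner-shortcut condition fail and verify that, in every such exceptional configuration, $A_1\cup A_2=Q$. These exceptions occur only for the small values $k\in\{3,4\}$, so once that finite arithmetic is carried out, the proof concludes by combining the three cases, and the second sentence of the lemma is immediate.
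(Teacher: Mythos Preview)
Your proof is correct. Both arguments are exhaustive case analyses showing that when neither ordered pair is an inner-shortcut pair and the edge is not an outer shortcut, the nonneighbor sets either are comparable or together exhaust $Q$; the difference is purely organizational. You split by the pair $(|A_1|,|A_2|)$ and reduce each case to a short modular computation, whereas the paper first writes out the three disjuncts obtained by negating each inner-shortcut clause and then branches on which disjunct holds (e.g., whether $\ell(y_1)\in\overline N_G(y_2)$). Your decomposition makes the arithmetic more transparent and isolates the outer-shortcut case cleanly as the $(1,1)$ branch; the paper's decomposition avoids fixing coordinates up front and handles several size combinations simultaneously via the $\ell/r$ notation. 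Either way the residual exceptional configurations are the same small-$k$ ones, and the verification that $A_1\cup A_2=Q$ there is identical.
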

\begin{proof} Let $y_1$ and $y_2$ be two adjacent witnesses of $\mathcal Q$ such that the edge $y_1y_2$ is not a shortcut. We will prove that either $y_1y_2$ is valid or is a cover. Let $Q=V(\mathcal Q)$.

As $y_1y_2$ is not a shortcut, all the following statements holds:
\begin{enumerate}[(i)]
 \item $\ell(y_1)\in\overline N_G(y_2)$, $r(y_2)\in\overline N_G(y_1)$, or $\ell(y_1)$ and $r(y_2)$ are consecutive in $\mathcal Q$.
 \item $\ell(y_2)\in\overline N_G(y_1)$, $r(y_1)\in\overline N_G(y_2)$, or $\ell(y_2)$ and $r(y_1)$ are consecutive in $\mathcal Q$.
 \item\label{it:noshortcut3} If $\ell(y_1)=r(y_1)$, $\ell(y_2)=r(y_2)$, $\ell(y_1)$ and $\ell(y_2)$ are consecutive in $\mathcal Q$, then $y_1$ and $y_2$ occur together in some witness enumeration of $\mathcal Q$.
\end{enumerate}

If $\ell(y_1)\in\overline N_G(y_2)$, there are three possible cases:
\begin{enumerate}[{Case }1:]
 \item \emph{$\ell(y_2)\in\overline N_G(y_1)$ holds.} Hence, either $\ell(y_1)=\ell(y_2)$ or $\overline N_G(y_1)\cup\overline N_G(y_2)\supseteq Q$. In the former case, $y_1y_2$ is valid because $\overline N_G(y_1)\cap Q$ and $\overline N_G(y_2)\cap Q$ are comparable, whereas in the latter case, $y_1y_2$ is a cover.
 
 \item \emph{$r(y_1)\in\overline N_G(y_2)$ holds.} Thus, $\overline N_G(q_1)\cap Q\subseteq\overline N_G(q_2)\cap Q$ and $y_1y_2$ is valid.

 \item \emph{$\ell(y_2)\notin\overline N_G(y_1)$, $r(y_1)\notin\overline N_G(y_2)$, and $\ell(y_2)$ and $r(y_1)$ are consecutive in $\mathcal Q$.} Suppose, for a contradiction, that $\ell(y_2)=v_i$ and $r(y_1)=v_{i+1}$ for some $i\in[k]$. Since $\ell(y_2)\notin\overline N_G(y_1)$ and $r(y_1)\notin\overline N_G(y_2)$, $\ell(y_1)=r(y_1)\neq\ell(y_2)=r(y_2)$, which contradicts $\ell(y_1)\in\overline N_G(y_2)$. This contradiction proves that $r(y_1)=v_i$ and $\ell(y_2)=v_{i+1}$ for some $i\in[k]$. As $\ell(y_1)\in\overline N_G(y_2)$ but $\ell(y_2)\notin\overline N_G(y_1)$, $r(y_2)=\ell(y_1)=v_{i+2}$. Since $r(y_1)\notin\overline N_G(y_2)$, $r(y_1)=v_{i+3}$. From $v_i=v_{i+3}$, we conclude that $k=3$ and $\overline N_G(y_1)\cup\overline N_G(y_2)\supseteq\{v_{i+1},v_{i+2}\}\cup\{v_{i+2},v_i\}=\{v_i,v_{i+1},v_{i+2}\}=Q$.
\end{enumerate}

The cases where $r(y_2)\in\overline N_G(y_1)$, $\ell(y_2)\in\overline N_G(y_1)$, or $r(y_1)\in\overline N_G(y_2)$ are symmetric to the case $\ell(y_1)\in\overline N_G(y_2)$ discussed above. Hence, in order to complete the proof of the lemma, it suffices to consider the case where $\ell(y_1)\notin\overline N_G(y_2)$, $r(y_2)\notin\overline N_G(y_1)$, $\ell(y_2)\notin\overline N_G(y_1)$, $r(y_1)\notin\overline N_G(y_2)$, $\ell(y_1)$ and $r(y_2)$ are consecutive in $\mathcal Q$, and $\ell(y_2)$ and $r(y_1)$ are consecutive in $\mathcal Q$. If $\ell(y_1)$ is immediately after $r(y_2)$ in $\mathcal Q$ and $\ell(y_2)$ is immediately after $r(y_1)$ in $\mathcal Q$, then $y_1y_2$ is a cover. Thus, without loss of generality, $r(y_2)$ is immediately after $\ell(y_1)$ in $\mathcal Q$. As $r(y_1)\notin\overline N_G(y_2)$, $r(y_1)=\ell(y_1)$. Symmetrically, as $\ell(y_2)\notin\overline N_G(y_1)$, $\ell(y_2)=r(y_2)$. Because of \eqref{it:noshortcut3}, $y_1$ and $y_2$ occur together in some witness enumeration of $\mathcal Q$ and, by definition, $y_1y_2$ is valid.\end{proof}

Based on the preceding three lemmas, we now show that, given a graph $G$ with an obstacle enumeration in it, it is possible to find in linear time an essential obstacle or one of the six graphs in Figure~\ref{fig:forb-HCA} contained in $G$ as an induced subgraph.

\begin{thm}\label{thm:essential} Given a graph $G$ and an obstacle enumeration $\mathcal Q$ in $G$, it is possible to find in linear time either an essential obstacle enumeration of some induced subgraph of $G$ or an induced subgraph of $G$ isomorphic to $C_4^*$, $K_{2,3}$, domino, $G_3$, $\overline{C_6}$, or $\overline{C_5+K_2}$. Moreover, if $G$ is a circular-arc graph, given a circular-arc model of $G$ and an obstacle enumeration $\mathcal Q$ in $G$, an essential obstacle enumeration of some induced subgraph of $G$ can be found in $O(n)$ time.\end{thm}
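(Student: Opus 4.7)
The plan is to iteratively reduce the given obstacle enumeration using the three structural lemmas already proved, stopping when the current enumeration contains no shortcut and no cover (so that Lemma~\ref{lem:noshort+nocover=essential} certifies it as essential), or when a cover triggers Lemma~\ref{lem:cover} and produces one of the six named forbidden subgraphs, or when a shortcut triggers Lemma~\ref{lem:shortcut} and produces a proper induced obstacle.

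First, I would build in $O(n+m)$ time the following data: for every witness $y$, the pair $(\ell(y),r(y))$ of nonneighbors of $y$ in the core $Q$, obtained by examining the incidence list of $y$ and marking core vertices (total cost $\sum_y \deg_G(y) \le 2m$); and, for every witness, the (constant-length) list of indices $i$ such that $y$ appears in $\mathcal W_i$. With this data the four predicates ``inner shortcut,'' ``outer shortcut,'' ``cover,'' and ``valid'' can each be evaluated in $O(1)$ time for any edge with both endpoints in $W(\mathcal Q)$, directly from their definitions. A single pass through the edges between witnesses therefore classifies every such edge in $O(n+m)$ time in total.

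The main obstacle is keeping the cost linear across iterated shortcut reductions, since a naive loop invoking Lemma~\ref{lem:shortcut} after each detected shortcut could spend $\Omega(k(n+m))$ time overall. To avoid this, I would observe that every reduction of Lemma~\ref{lem:shortcut} permanently deletes either a contiguous arc of core vertices (inner-shortcut case) together with their associated witnesses or a single witness $y$ (outer-shortcut case), and modifies at most one witness enumeration $\mathcal W_i$. Consequently, after each reduction the only edges whose classification can change are those incident to the $O(1)$ witnesses sitting at the boundary of the altered $\mathcal W_i$; an amortized charging argument billing each reclassification to the vertex being removed then bounds the total extra work by $O(n+m)$. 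Covers are detected during the same classification and, by Lemma~\ref{lem:cover}, can only arise when the surviving core has $k\in\{3,4\}$, so their resolution runs in $O(1)$ additional time via the bounded case analysis in the proof of that lemma. When no shortcut and no cover remain, Lemma~\ref{lem:noshort+nocover=essential} guarantees that the current enumeration is essential, completing the first assertion.

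For the second assertion, none of $C_4^*$, $K_{2,3}$, domino, $G_3$, $\overline{C_6}$, and $\overline{C_5+K_2}$ is a circular-arc graph, so on a circular-arc input the algorithm's output is necessarily an essential obstacle enumeration of an induced subgraph of $G$. To improve the bound to $O(n)$ when a circular-arc model of $G$ is given, I would avoid any explicit enumeration of edges and instead exploit the geometry of the model: the nonneighbors in $Q$ of each witness $y$ are precisely the core arcs disjoint from the arc corresponding to $y$, and these can be identified in amortized $O(1)$ per witness by a single circular sweep over the $2n$ arc endpoints; the witness-to-witness adjacency tests demanded by the classification are similarly answered from endpoint comparisons on the circle. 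Since the iteration removes arcs permanently and each arc is touched a bounded number of times, the overall procedure runs in $O(n)$ time, as claimed.
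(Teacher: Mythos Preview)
Your high-level plan coincides with the paper's: repeatedly apply Lemma~\ref{lem:shortcut} to shrink, apply Lemma~\ref{lem:cover} if a cover appears, and declare success via Lemma~\ref{lem:noshort+nocover=essential} when neither remains. For the $O(n+m)$ bound, however, the paper's bookkeeping is simpler than yours. Rather than classifying all witness--witness edges up front and then \emph{reclassifying} boundary edges after each shrink, the paper processes each such edge exactly once and relies on a single invariant: once an edge is found valid it can never later become a shortcut or a cover (this follows directly from the definition of ``valid'', which is preserved under removing core vertices and witnesses). That invariant eliminates reclassification entirely. Your claim that only edges incident to $O(1)$ ``boundary'' witnesses can change class is plausible but not argued; note in particular that after a shrink the core may drop to size $3$ or $4$, at which point edges between non-boundary witness enumerations can newly satisfy the cover condition, so the claim needs care.

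For the $O(n)$ bound the gap is more serious. You assert that adjacency tests are $O(1)$ from the model and that ``each arc is touched a bounded number of times'', but you never explain \emph{which} witness pairs you examine. There can be $\Theta(n^2)$ edges between witnesses, so you need an argument that only $O(n)$ of them are ever inspected. The paper supplies exactly this missing count: it visits witness--witness edges in the order they appear while traversing the model, and shows that at most $O(n)$ shortcut edges are ever visited (each shrink removes a vertex), and that every edge ever found valid has a \emph{support}---either a core vertex or a ``surrounding'' edge $v_iv_{i+1}$ of some $\mathcal Q$ produced along the way---with each vertex supporting at most one valid edge and each surrounding edge supporting at most four; since only $O(n)$ surrounding edges are ever created, the total number of valid edges visited is $O(n)$. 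Without this (or an equivalent) counting argument, your $O(n)$ claim is unjustified.
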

\begin{proof} We visit each of the edges joining two current witnesses of $\mathcal Q$, while updating $\mathcal Q$ (including its witness enumerations, the list of nonneighbors in $\mathcal Q$ for each witness of $\mathcal Q$, and the witness enumerations of $\mathcal Q$ in which each vertex occurs), as follows. More precisely, we visit the edges $y_1y_2$ joining two current witnesses $y_1$ and $y_2$ of the current $\mathcal Q$ doing the following:
\begin{itemize}
 \item If $y_1y_2$ is a cover of $\mathcal Q$, then, proceeding as in the proof of Lemma~\ref{lem:cover}, we output either an essential obstacle enumeration of some induced subgraph of $G$ or an induced subgraph of $G$ isomorphic to $C_4^*$, $K_{2,3}$, domino, $G_3$, $\overline{C_6}$, or $\overline{C_5+K_2}$, and stop.
 
 \item If $y_1y_2$ is a shortcut of $\mathcal Q$, then we modify $\mathcal Q$ as in the proof of Lemma~\ref{lem:shortcut}. We call this a \emph{shrinking operation} as it decreases by at least one the number of vertices of the induced subgraph of $G$ of which $\mathcal Q$ is an obstacle enumeration. Notice that, after the shrinking operation, the edge $y_1y_2$ is valid for the resulting $\mathcal Q$.
 
 \item If $y_1y_2$ is valid, we do not modify $\mathcal Q$.
\end{itemize}
Because of Lemma~\ref{lem:noshort+nocover=essential}, one of the above three cases occurs. By definition, if an edge $y_1y_2$ is found valid for the current $\mathcal Q$, then it cannot become a shortcut or a cover of $\mathcal Q$ after any number of shrinking operations. (Eventually, $y_1y_2$ will stop being valid if at least one of its endpoints is no longer a witness of $\mathcal Q$.) Hence, if after having visited all the edges $y_1y_2$, we have found no cover, then, by Lemma~\ref{lem:noshort+nocover=essential}, the final $\mathcal Q$ is an essential obstacle enumeration of some induced subgraph of $G$. As performing all the shrinking operations takes $O(n)$ time in total and any obstacle enumeration having a cover involves at most ten vertices (meaning those in the core plus the witnesses), the whole procedure can be completed in linear time.

For the analysis when $G$ is given through one of its circular-arc models, we introduce some definitions. Let $\mathcal Q=v_1,v_2,\ldots,v_k$ be an obstacle enumeration. We call \emph{surrounding edges of $\mathcal Q$} to the edges $v_iv_{i+1}$ for every $i\in[k]$. If $y_1y_2$ is a valid edge of $\mathcal Q$, we define the \emph{support of $y_1y_2$ in $\mathcal Q$} as follows. If $y_1$ and $y_2$ occur together in the witness enumeration of the edge $v_iv_{i+1}$ for some $i\in[k]$, then the support of $y_1y_2$ is defined to be the edge $v_iv_{i+1}$. If $y_1$ and $y_2$ do not occur together in any witness enumeration of $\mathcal Q$, then the support of $y_1y_2$ is the unique vertex in the singleton $\overline N_G(y_1)\cap\overline N_G(y_2)$. Roughly speaking, assuming the witnesses of $\mathcal Q$ are labeled as in the definition of obstacles, if $y_1y_2$ equals $u_iz_i$ for some $i\in[k]$, then the support of $y_1y_2$ is the edge $v_iv_{i+1}$, whereas, if $y_1y_2$ equals $z_{i-1}w_i$, $w_{i-1}u_i$, or $z_{i-1}u_i$ for some $i\in[k]$, then the support of $y_1y_2$ is the vertex $v_i$.

Suppose now that instead of the graph $G$, a circular-arc model $\mathcal A$ of $G$ is given as input. We apply the same procedure described at the beginning of this proof but visiting the edges $y_1y_2$ joining two current witnesses of the current $\mathcal Q$ as found when traversing $\mathcal A$. As none of the graphs $C_4^*$, $K_{2,3}$, domino, $G_3$, $\overline{C_6}$, or $\overline{C_5+K_2}$ is a circular-arc graph, the output will be some essential obstacle enumeration of some induced subgraph of $G$. If a cover of $\mathcal Q$ is visited, then the induced subgraph of $G$ of which $\mathcal Q$ is an obstacle enumeration has at most ten vertices and, consequently, after the circular-arc submodel of $\mathcal A$ corresponding to the arcs representing these at most ten vertices is extracted in $O(n)$ time, the desired essential obstacle enumeration can be found in additional $O(1)$ time. As each time a shortcut edge is visited, the number of vertices of the induced subgraph of $G$ of which $\mathcal Q$ is an obstacle enumeration decreases by at least one, the number of shortcut edges visited all along the execution of the algorithm is $O(n)$. We call a \emph{surrounding edge of $G$} to any edge of $G$ which is a surrounding edge of any of the different obstacle enumerations $\mathcal Q$ all along the execution of the algorithm. As the number of surrounding edges of $G$ increases by at most one each time a shortcut edge is visited and remains the same when a valid edge is visited, the total number of surrounding edges of $G$ is $O(n)$. Noticing that: (1) each edge found valid during the execution of the algorithm has as support either a vertex or a surrounding edge of $G$, (2) each vertex can serve as support to at most one valid edge all along the execution of the algorithm, and (3) each surrounding edge of $G$ can serve as support to at most four different edges all along the execution of the algorithm, we conclude that the total number of edges found valid all along the execution is $O(n)$. Hence, the total number of edges visited all along the execution is $O(n)$. As performing all the shrinking operations takes $O(n)$ time in total, the $O(n)$ time bound for the whole procedure follows.\end{proof}

We now prove that each essential obstacle is a minimal forbidden induced circular-arc subgraph for the class of Helly circular-arc graphs.

\begin{lem}\label{lem:essential-HCA} Every essential obstacle is a circular-arc graph and a minimal forbidden induced subgraph for the class of Helly circular-arc graphs.\end{lem}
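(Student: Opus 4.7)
The plan is to establish two things about an essential obstacle $G$: that $G$ is a circular-arc graph, and that $G$ is a minimal forbidden induced subgraph for the class of Helly circular-arc graphs. Once $G$ is shown to be a CA graph, Theorem~\ref{thm:HCAobst} gives that $G$ is not Helly (because $G$ is itself an obstacle). For the minimality, any proper induced subgraph $H$ of $G$ is automatically a circular-arc graph, so by Theorem~\ref{thm:HCAobst} again it will suffice to show that $G$ contains no proper induced obstacle at all.

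For the CA-model construction I would use the essential obstacle enumeration $\mathcal{Q}=v_1,\dots,v_k$ of $G$ with core $Q$ as a blueprint. Place $k$ subarcs $R_1,\dots,R_k$ of a circle in cyclic order so that consecutive $R_i,R_{i+1}$ overlap in a nondegenerate arc $R_i\cap R_{i+1}$, nonconsecutive $R_i$'s are disjoint, and $R_1\cup\dots\cup R_k$ covers the whole circle; define $A_{v_i}$ as the complement of $R_i$. The $A_{v_i}$'s then pairwise intersect (any $R_\ell$ with $\ell\notin\{i,j\}$ lies inside $A_{v_i}\cap A_{v_j}$) but have no common point (every point of the circle lies in some $R_i$). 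For each witness $y$, place $A_y$ strictly inside $R_{\ell(y)}\cap R_{r(y)}$ (which equals $R_{\ell(y)}$ when $\ell(y)=r(y)$); when the witness enumeration of $v_iv_{i+1}$ has two vertices $u_i,z_i$, place $A_{u_i}\subseteq R_i$ and $A_{z_i}\subseteq R_{i+1}$ with a small overlap inside $R_i\cap R_{i+1}$, thereby realizing the edge $u_iz_i$. The validity of every edge between two witnesses is the combinatorial condition that makes this arrangement possible: pairs with comparable $\overline N_G(\cdot)\cap Q$ get nested arcs inside the same region, while pairs co-occurring in a single witness enumeration sit across the shared boundary overlap. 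Nonadjacencies are enforced by keeping each witness arc strictly inside its designated subregion(s), with small margins at the endpoints.

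For the minimality step, suppose for contradiction that some proper induced subgraph of $G$ is an obstacle. Iterating Lemma~\ref{lem:shortcut} to eliminate shortcuts and then applying Lemmas~\ref{lem:cover} and~\ref{lem:noshort+nocover=essential}, we find an induced subgraph of $G$ that is either a proper induced essential obstacle or is isomorphic to one of the six graphs in Figure~\ref{fig:forb-HCA}; the latter is ruled out because $G$ is a CA graph and those six graphs are not. Hence $G$ contains a proper induced essential obstacle $G''$ with essential enumeration $\mathcal{Q}''$. To finish, I would embed $V(G'')$ into the CA model of $G$ constructed above and analyze how the core and witness arcs of $\mathcal{Q}''$ lie relative to the regions $R_1,\dots,R_k$ of $\mathcal{Q}$, to extract either a pair of adjacent witnesses of $\mathcal{Q}$ whose edge is a shortcut or a cover (contradicting the essentialness of $\mathcal{Q}$) or a configuration in which the core of $\mathcal{Q}''$ is forced to have a clique point in the model (contradicting that $\mathcal{Q}''$ is an obstacle enumeration).

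The main obstacle I expect is the circular-arc model construction: giving a concrete linear order of the witness arcs inside each region $R_i$ that simultaneously realizes all witness--witness adjacencies and all witness--core nonadjacencies requires checking that the nesting/co-occurrence dichotomy provided by validity propagates consistently around the whole cyclic structure. The minimality step will also demand careful case analysis, since it involves comparing two essential obstacle enumerations coexisting in the same circular-arc graph.
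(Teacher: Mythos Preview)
Your circular-arc construction follows the same blueprint as the paper's: complement arcs for the core vertices and short arcs inside the ``forbidden'' regions for the witnesses. The paper carries this out explicitly by placing three reference points $\ell_i,m_i,r_i$ on the circle for each $i$ and specifying the arc of each witness case by case (four cases each for $u_i$ and $z_i$, governed by the type of the neighboring witness enumeration and whether the relevant valid edge is present). Your sketch is on the right track but would need these details; in particular, your dichotomy ``nested versus shared-boundary overlap'' does not by itself handle the pair $z_{i-1},u_i$ when they are distinct and may or may not be adjacent, which is exactly what the midpoint $m_i$ is for.

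The substantive divergence is in the minimality argument. The paper does \emph{not} try to show that $G$ contains no proper induced obstacle. Instead, having built the model $\mathcal A$, it first lists every maximal clique of $G$ other than $Q$ and names a clique point for each one in $\mathcal A$; the only maximal clique lacking a clique point is $Q$ itself. Then, for each vertex $v$, it shows directly that $G-v$ is Helly circular-arc by exhibiting a Helly model: remove $A_v$ and, if $v$ is a witness, stretch one or two adjacent core arcs so that $Q$ acquires a clique point (if $v\in Q$, then $Q-v$ is either non-maximal or already has a clique point). This is short and entirely constructive.

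Your route---assume a proper induced essential obstacle $G''$ exists and compare $\mathcal Q''$ with $\mathcal Q$ inside the model---has a genuine gap as stated. The core vertices of $\mathcal Q''$ need not be witnesses of $\mathcal Q$ (and vice versa), so there is no a priori reason the comparison produces a shortcut or cover of $\mathcal Q$. And the alternative you propose, that the core $Q''$ is ``forced to have a clique point,'' would not by itself contradict $\mathcal Q''$ being an obstacle enumeration: having a clique point for one clique in one particular model says nothing combinatorial about $\mathcal Q''$; you would need clique points for \emph{all} maximal cliques of $G''$ simultaneously, which is just the statement that $G''$ is Helly. The paper's direct construction of Helly models for every $G-v$ sidesteps this entirely.
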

\begin{proof} Let $G$ be an essential obstacle and let $\mathcal Q=v_1,v_2,\ldots,v_k$ be an essential obstacle enumeration of $G$. We denote by $\mathcal W_i$ the witness enumeration of the edge $v_iv_{i+1}$ in $\mathcal Q$ for each $i\in[k]$. All along this proof, all subindices on $\ell_i$, $r_i$, and $m_i$ are modulo $k$. Let $Q=V(\mathcal Q)$.

As $\mathcal Q$ is essential, if there is some maximal clique of $G$ consisting only of witnesses of $\mathcal Q$, then necessarily $k=3$ and there are three vertices $u_1$, $u_2$, and $u_3$ of $G$ such that $\mathcal W_1=u_1,u_2$, $\mathcal W_2=u_2,u_3$, and $\mathcal W_3=u_3,u_1$. In such a case, $G$ is isomorphic to $\overline{3K_2}$ and it can be verified by inspection that $G$ is a circular-arc graph and a minimal forbidden induced subgraph for the class of Helly circular-arc graphs. Henceforth, we assume, without loss of generality, that $G$ is not isomorphic to $\overline{3K_2}$ and, consequently, every maximal clique of $G$ has at least one vertex in the set $Q$.

We build a circular-arc model of $G$ as follows. Let $C$ be a circle. Given two points $p$ and $q$ of $C$, we denote by $(p,q)$ the open arc of points of $C$ found when traversing $C$ in clockwise direction from $p$ to $q$; the semi-open arcs $(p,q]$ and $[p,q)$ and the closed arc $[p,q]$ are defined similarly. Let $\ell_1,\ell_2,\ldots,\ell_k$ be $k$ different points of $C$ occurring in that precise order when traversing $C$ in clockwise direction. Let $r_1,r_2,\ldots,r_k$ be other $k$ points of $C$ such that $r_i$ belongs to the arc $(\ell_{i+1},\ell_{i+2})$ for each $i\in[k]$. Finally, let $m_1,m_2,\ldots,m_k$ be other $k$ points of $C$ such that $m_i$ belongs to the arc $(r_{i-1},\ell_{i+1})$ for each $i\in[k]$. We define an arc $A_v$ for each vertex $v$ of $G$ as follows:
\begin{itemize}
 \item If $v=v_i$ for some $i\in[k]$, $A_{v_i}=(r_i,\ell_i)$ (i.e., $A_{v_i}=C-[\ell_i,r_i]$).

 \item If $v=w_i$ for some $i\in[k]$ such that $W_i=w_i$, then $A_{w_i}=[\ell_{i+1},r_i]$.
 
 \item If $v=u_i$ for some $i\in[k]$ such that $W_i=u_i,z_i$, then
 \[
    A_{u_i}=\begin{cases}
               [r_{i-1},r_i]&\text{if $\mathcal W_{i-1}=w_{i-1}$ and $w_{i-1}$ is adjacent to $u_i$},\\
               [\ell_i,r_i]&\text{if $\mathcal W_{i-1}=u_{i-1},z_{i-1}$ and $z_{i-1}$ and is equal to $u_i$},\\
               [m_i,r_i]&\text{if $\mathcal W_{i-1}=u_{i-1},z_{i-1}$ and $z_{i-1}$ is adjacent to $u_i$},\\
               (m_i,r_i]&\text{otherwise.}
               \end{cases}
 \]

 \item If $v=z_i$ for some $i\in[k]$ such that $\mathcal W_i=u_i,z_i$, then
 \[
    A_{z_i}=\begin{cases}
              [\ell_{i+1},\ell_{i+2}]&\text{if $\mathcal W_{i+1}=w_{i+1}$ and $w_{i+1}$ is adjacent to $z_i$},\\
              [\ell_{i+1},r_{i+1}]&\text{if $\mathcal W_{i+1}=u_{i+1},z_{i+1}$ and $u_{i+1}$ is equal to $z_i$},\\
              [\ell_{i+1},m_{i+1}]&\text{if $\mathcal W_{i+1}=u_{i+1},z_{i+1}$ and $u_{i+1}$ adjacent to $z_i$},\\
              [\ell_{i+1},m_{i+1})&\text{otherwise.}
              \end{cases}
 \]
\end{itemize}
It is easy to verify that $\mathcal A=\{A_v:v\in V(G)\}$ is a circular-arc model of $G$. We list all maximal cliques of $G$ different from $Q$ and give a clique point in $\mathcal A$ for each of them:
\begin{itemize}
 \item $\{u_i,z_i\}\cup(Q-\{v_i,v_{i+1}\})$ for each $i\in[k]$ such that $\mathcal W_i=u_i,z_i$. For each such $i$, any point in the arc $(\ell_{i+1},r_i)$ is a clique point for this clique.
 
 \item $\{w_i,u_{i+1}\}\cup(Q-\{v_i,v_{i+1}\})$ for each $i\in[k]$ such that $\mathcal W_i=w_i$, $\mathcal W_{i+1}=u_{i+1},z_{i+1}$, and $w_iu_{i+1}\in E(G)$. For each such $i$, $r_i$ is a clique point for this clique.
 
 \item $\{z_{i-1},w_i\}\cup(Q-\{v_i,v_{i+1}\})$ for each $i\in[k]$ such that $\mathcal W_i=w_i$, $\mathcal W_{i-1}=u_{i-1},z_{i-1}$, and $w_iz_{i-1}\in E(G)$. For each such $i$, $\ell_{i+1}$ is a clique point for this clique.

 \item $\{z_{i-1},u_i\}\cup(Q-\{v_i\})$ for each $i\in[k]$ such that $\mathcal W_{i-1}=u_{i-1},z_{i-1}$, $\mathcal W_i=u_i,z_i$, and $z_{i-1}u_i\in E(G)$. For each such $i$, $m_i$ is a clique point for this clique.
 
 \item $\{w_i\}\cup(Q-\{v_i,v_{i+1}\})$ for each $i\in[k]$ such that $\mathcal W_i=w_i$ and $N_G(w_i)\subseteq Q$. For each such $i$, each point in the arc $(\ell_{i+1},r_i)$ is a clique point for this clique.
 
 \item $\{u_i\}\cup(Q-\{v_i\})\}$ for each $i\in[k]$ such that $\mathcal W_i=u_i,z_i$ unless both $\mathcal W_{i-1}=u_{i-1},z_{i-1}$ and $z_{i-1}u_i\in E(G)$. For each such $i$, any point in the arc $(m_i,\ell_{i+1})$ is a clique point for this clique.
 
 \item $\{z_{i-1}\}\cup(Q-\{v_i\})\}$ for each $i\in[k]$ such that $\mathcal W_{i-1}=z_{i-1},u_{i-1}$ unless both $\mathcal W_i=u_i,z_i$ and $z_{i-1}u_i\in E(G)$. For each such $i$, any point in the arc $(r_{i-1},m_i)$ is a clique point for this clique.
\end{itemize}

Let $j\in[k]$. We claim that, $\mathcal A-A_{v_j}$ is a Helly circular-arc model of $G-v_j$. In order to prove the claim, let $\mathcal A'=\mathcal A-A_{v_j}$ and let $Q,Q_1,\ldots,Q_r$ be the maximal cliques of $G$. Clearly, each maximal clique of $G-v_j$ equals either $Q-v_j$ or $Q_s-v_j$ for some $s\in[r]$. As for each $s\in[r]$, the clique point of $Q_s$ in $\mathcal A$ is a clique point of $Q_s-v_j$ in $\mathcal A'$, it only remains to prove that either $Q-v_j$ is not a maximal clique of $G-v_j$ or there is a clique point for $Q-v_j$ in $\mathcal A'$. If $W_{j-1}=u_{j-1},z_{j-1}$ or $W_j=u_j,z_j$, then $Q-v_j$ is not a maximal clique of $G$ because $G$ has some maximal clique $Q_s$ containing $Q-v_j$ and at least one witness of $\mathcal Q$ and, consequently, $Q_s-v_j$ is a clique of $G-v_j$ properly containing $Q-v_j$. If, on the contrary, $W_{j-1}=w_{j-1}$ and $W_j=w_j$, then, by construction, $m_j$ is a clique point of $Q-v_j$ in $\mathcal A'$. This completes the proof of the claim.

Let $y\in V(G)-Q$. We build a Helly circular-arc model of the graph $G-y$ as follows.
\begin{itemize}
 \item If $y=w_i$, where $\mathcal W_i=w_i$ for some $i\in[k]$, the circular-arc model $\mathcal A'$ that arises from $\mathcal A$ by removing $A_{w_i}$ and replacing $A_{v_i}$ by $(\ell_{i+1},\ell_i)$ and $A_{v_{i+1}}$ by $(r_{i+1},r_i)$ is a Helly circular-arc model for $G-w_i$ because each point in the arc $(\ell_{i+1},r_i)$ is clique point for $Q$ in $\mathcal A'$.
 
 \item If $y=u_i$, where $\mathcal W_i=u_i,z_i$ for some $i\in[k]$, the circular-arc model $\mathcal A'$ that arises from $\mathcal A$ by removing $A_{u_i}$ and replacing $A_{v_i}$ by $(m_i,\ell_i)$ is a Helly circular-arc model of $G-u_i$ because each point in the arc $(m_i,\ell_{i+1})$ is a clique point for $Q$ in $\mathcal A'$.
 
 \item If $y=z_{i-1}$, where $\mathcal W_{i-1}=u_{i-1},z_{i-1}$ for some $i\in[k]$, the circular-arc model $\mathcal A'$ that arises from $\mathcal A$ by removing $A_{z_i}$ and replacing $A_{v_i}$ by $(r_i,m_i)$ is a Helly circular-arc model of $G-z_{i-1}$ because each point in the arc $(r_{i-1},m_i)$ is a clique point for $Q$ in $\mathcal A'$.
\end{itemize}
As $G$ is not a Helly circular-arc graph (because it is an obstacle) but $G-v$ has a Helly circular-arc model for each $v\in V(G)$, $G$ is a minimal forbidden subgraph for the class of Helly circular-arc graph. This completes the proof of the lemma.\end{proof}

We are now ready to prove that the minimal forbidden induced circular-arc subgraphs for the class of Helly circular-arc graphs are precisely the essential obstacles.

\begin{proof}[Proof of Theorem~\ref{thm:main}] Let $G$ be a minimal forbidden induced circular-arc subgraph for the class of Helly circular-arc graphs. As $G$ is a circular-arc graph and not a Helly circular-arc graph, Theorem~\ref{thm:HCAobst} implies that $G$ contains an induced obstacle. As $G$ is a circular-arc graph, Theorem~\ref{thm:essential} implies that $G$ contains some induced essential obstacle $H$. Because of the minimality of $G$, $G$ equals $H$. Conversely, by Lemma~\ref{lem:essential-HCA}, essential obstacles are minimal forbidden induced circular-arc subgraphs for the class of Helly circular-arc graph.\end{proof}

As mentioned in the introduction, the algorithm by Joeris et al.~\cite{MR2765574} produces positive and negative certificates when the input graph is a circular-arc graph.

\begin{thm}[\cite{MR2765574}]\label{thm:joeris} Given a circular-arc graph $G$, it is possible to find in linear time either a Helly circular-arc model of $G$ or an obstacle enumeration of some induced subgraph of $G$. Moreover, if a circular-arc model of $G$ is given as input, the time bound reduces to $O(n)$.\end{thm}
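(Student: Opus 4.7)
The plan is to reduce to the certifying linear-time recognition algorithm of Joeris, Lin, McConnell, Spinrad, and Szwarcfiter, using Gavril's characterization of Helly circular-arc graphs via the circular-ones property of the clique matrix (i.e., the maximal cliques can be arranged in a circular order so that, for every vertex $v$, the maximal cliques containing $v$ appear consecutively). The algorithm I would set up has three phases: (1) produce a candidate circular order of the maximal cliques; (2) test whether this order witnesses the circular-ones property; (3) depending on the outcome, emit either a Helly circular-arc model or an obstacle enumeration.

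First, I would compute the maximal cliques of $G$ together with a candidate circular ordering. If a circular-arc model $\mathcal A$ of $G$ is given as input, this is done by sweeping the $2n$ arc endpoints around the circle and recording the set of arcs crossing each ``gap''; each maximal clique corresponds to a gap having inclusion-wise maximal cover, and the gaps inherit a natural cyclic order. This takes $O(n)$ time. If only the graph $G$ is given, I would first invoke a linear-time circular-arc recognition algorithm to build a circular-arc model and then reduce to the previous case, giving $O(n+m)$ time overall.

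Next, I would test whether this candidate circular order in fact places, for every vertex of $G$, the maximal cliques containing it consecutively. This is a circular-ones test on the clique matrix and can be executed in linear time (e.g.\ via a PC-tree based routine, or directly by the same sweep that produced the cliques). If the test succeeds, a Helly circular-arc model is read off from the order: each vertex is assigned the closed circular arc spanning the consecutive cliques to which it belongs. By Gavril's theorem this yields a valid Helly circular-arc model of $G$, and it is produced without ever leaving the allowed time budget.

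The hard part is the negative case: when the circular-ones test fails, one must not merely report failure but exhibit an explicit obstacle enumeration $\mathcal Q = v_1, \ldots, v_k$ together with witness enumerations $\mathcal W_i$ of the form $(\mathcal O_1)$ or $(\mathcal O_2)$. The plan here is to instrument the circular-ones test so that, at the moment of inconsistency, it returns a short local ``bad pattern'': a clique $Q$ of $G$ from the candidate order, together with, for each pair $v_i v_{i+1}$ of consecutive cliques of $Q$, a vertex (or pair of vertices) whose non-neighbors in $Q$ violate consecutiveness exactly in the arc between $v_i$ and $v_{i+1}$. By Theorem~\ref{thm:HCAobst}, such a pattern is guaranteed to exist whenever $G$ is a circular-arc graph that is not Helly. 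Translating the local obstruction into the combinatorial shape required by $(\mathcal O_1)$ and $(\mathcal O_2)$ amounts to a case analysis on how the violating vertices meet $Q$: either a single vertex missing exactly two consecutive $v_i, v_{i+1}$, or an adjacent pair missing one each. Maintaining, during the sweep, for every vertex the first and last clique of the candidate order that contains it, and for every gap of $Q$ the vertices certifying the gap, lets this extraction be performed in $O(n)$ additional time, giving the claimed bounds.
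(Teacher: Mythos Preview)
The paper does not prove this theorem: it is quoted verbatim as a result of Joeris, Lin, McConnell, Spinrad, and Szwarcfiter~\cite{MR2765574} and used as a black box. There is therefore no ``paper's own proof'' to compare your proposal against; the comparison you were asked to make is vacuous here.

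As a standalone sketch of the cited algorithm, your outline is broadly in the right spirit (sweep a circular-arc model, extract the clique structure, and on failure of the Helly/circular-ones condition produce a combinatorial certificate), but the negative case is where all the content lies and your description of it is both vague and slightly garbled. You write ``a clique $Q$ of $G$ from the candidate order, together with, for each pair $v_iv_{i+1}$ of consecutive cliques of $Q$''; but the $v_i$ in an obstacle enumeration are \emph{vertices} of a single clique $Q$, not maximal cliques in the circular order, and you never explain how one passes from a failure of circular-ones on the \emph{maximal-clique matrix} to a specific clique $Q=\{v_1,\ldots,v_k\}$ equipped with a circular order on its \emph{vertices} and with witnesses $\mathcal W_i$ satisfying $(\mathcal O_1)$ or $(\mathcal O_2)$. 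Saying this ``amounts to a case analysis'' is exactly the step that Joeris et al.\ had to work out, and it is not obvious; in particular, ``maintaining for every vertex the first and last clique containing it'' does not by itself hand you the core $Q$ or the cyclic order on $V(Q)$. If you want to actually reprove the theorem rather than cite it, that extraction is the part you must spell out.
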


Theorems~\ref{thm:main}, \ref{thm:essential}, and~\ref{thm:joeris} imply the following.

\begin{cor}\label{cor:cAminHCA} Given a circular-arc graph $G$, it is possible to find in linear time either a Helly circular-arc model of $G$ or an essential obstacle enumeration of some minimal forbidden induced subgraph for the class of Helly circular-arc graphs contained in $G$ as an induced subgraph. Moreover, if a circular-arc model of $G$ is given as input, the time bound reduces to $O(n)$.\end{cor}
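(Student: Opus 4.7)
The plan is to combine Theorems~\ref{thm:joeris}, \ref{thm:essential}, and~\ref{thm:main} in a direct pipeline. First I would invoke Theorem~\ref{thm:joeris} on the input $G$: in linear time (respectively $O(n)$ time if a circular-arc model of $G$ is supplied) it either returns a Helly circular-arc model of $G$, in which case we output that model and stop, or it returns an obstacle enumeration $\mathcal Q$ in some induced subgraph $G'$ of $G$. In the latter case, I would hand $G'$ (or rather, the submodel of the input model obtained by restricting to the arcs that correspond to vertices of $G'$) together with $\mathcal Q$ to the algorithm of Theorem~\ref{thm:essential}.

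Since $G$ is a circular-arc graph, so is every induced subgraph $G'$; hence none of the graphs $C_4^*$, $K_{2,3}$, domino, $G_3$, $\overline{C_6}$, $\overline{C_5+K_2}$ can occur as an induced subgraph of $G'$ (each of them is explicitly stated in Section~\ref{sec:defs} to be a non-circular-arc graph). Therefore the only possible output of Theorem~\ref{thm:essential} is an essential obstacle enumeration of some induced subgraph $H$ of $G'$, and consequently of $G$. By Lemma~\ref{lem:essential-HCA} (equivalently, by Theorem~\ref{thm:main}), $H$ is a minimal forbidden induced subgraph for the class of Helly circular-arc graphs (and, being an induced subgraph of $G$, is in fact a minimal forbidden induced circular-arc subgraph), so the output of the pipeline fulfills the requirements of the corollary.

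For the time bound, both Theorem~\ref{thm:joeris} and Theorem~\ref{thm:essential} run in linear time on graph input, and both in $O(n)$ time when a circular-arc model is provided. The only additional work between the two invocations is extracting, in the model-input case, the circular-arc submodel of $G'$ from the given model of $G$: this amounts to deleting the arcs that correspond to vertices outside $V(G')$ and can be done in $O(n)$ time by a single traversal of the given list of arc endpoints. Summing these contributions yields the claimed linear (resp.\ $O(n)$) overall running time. The only mild point to check is this bookkeeping step of restricting the model to $V(G')$ so as to feed Theorem~\ref{thm:essential} correctly; everything else is a straightforward composition of results already established.
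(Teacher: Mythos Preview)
Your proposal is correct and follows exactly the approach the paper intends: the paper simply states that the corollary follows from Theorems~\ref{thm:main}, \ref{thm:essential}, and~\ref{thm:joeris} without spelling out the pipeline, and you have filled in precisely those details. The only superfluous step is extracting the submodel for $G'$, since Theorem~\ref{thm:essential} (second part) can be applied directly to the original model of $G$ with $\mathcal Q$ as an obstacle enumeration \emph{in} $G$; but your version is equally valid.
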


\begin{rmk} The total number of minimal forbidden induced subgraphs for the class of Helly circular-arc graphs having at most $N$ vertices grows exponentially as $N$ increases. For instance, suppose we want to build an essential obstacle $G$ with essential obstacle enumeration $Q=v_1,\ldots,v_k$ for some $k\geq 3$ such that, for each $i\in[k]$, the witness enumeration of $v_iv_{i+1}$ is $u_i,z_i$ for some vertices $u_i$ and $z_i$. For each $i\in[k]$, we have three choices: (1) $z_{i-1}=u_i$, (2) $z_{i-1}$ and $u_i$ are different and nonadjacent, or (3) $z_{i-1}$ and $u_i$ are adjacent. All these choices can be made independently for each $i$ because any combination of them always leads to an essential obstacle $G$, as long as the only edges in $G$ joining two witnesses of $\mathcal Q$ are precisely those produced by choosing (3) for certain values of $i$. We may associate with $G$ a sequence $a_1,\ldots,a_k$ of values $1$, $2$, and $3$ corresponding to the choices made for each $i$ from $1$ to $k$. Clearly, two such essential obstacles $G_1$ and $G_2$ are isomorphic if and only if their corresponding sequences belong to the same equivalence class of sequences of length $k$ with values $1$, $2$, and $3$, up to rotations and reversals. These equivalence classes are known as \emph{ternary bracelets of length $k$}. Hence, there are as many such nonisomorphic essential obstacles as the number of ternary bracelets of length $k$, which is known to be
\begin{equation}\label{bracelets}
  \frac 1{2k}\sum_{d\mid k}\varphi(d)3^{k/d}+
   \begin{cases}
    3^{k/2} &\text{if $k$ is even,}\\
    \frac 12 3^{(k+1)/2} &\text{if $k$ is odd,}     
   \end{cases} \end{equation}
where $d\mid k$ means `$d$ is a positive divisor of $k$' and $\varphi$ denotes Euler's totient function. (For a derivation of \eqref{bracelets}, see e.g.~\cite{MR0096594}.)\end{rmk}

\section{Helly circular-arc graphs with no claw and no 5-wheel}\label{sec:quasi-line}

In this section, we give the minimal forbidden induced subgraph characterization of Helly circular-arc graphs restricted to graphs containing no induced claw and no induced $5$-wheel. Moreover, we show that in linear time it is possible to find an induced claw, an induced $5$-wheel, or an induced minimal forbidden induced subgraph for the class of Helly circular-arc graphs, in any given graph that is not a Helly circular-arc graph. Some small graphs needed in what follows are depicted in Figure~\ref{fig:more-small}.

\begin{figure}[t!]
\ffigbox[\textwidth]{%
\ffigbox[\FBwidth]{%
\begin{subfloatrow}
\ffigbox[0.18\textwidth]{\includegraphics{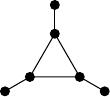}}{\caption{net}}
\ffigbox[0.18\textwidth]{\includegraphics{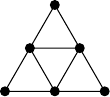}}{\caption{tent}}
\ffigbox[0.18\textwidth]{\includegraphics{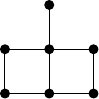}}{\caption{$H_2$}}
\ffigbox[0.18\textwidth]{\includegraphics{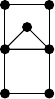}}{\caption{$H_3$}}
\ffigbox[0.18\textwidth]{\includegraphics{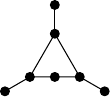}}{\caption{$H_4$}}
\end{subfloatrow}}{}
\bigskip
\ffigbox[\FBwidth]{%
\begin{subfloatrow}
\ffigbox[0.18\textwidth]{\includegraphics{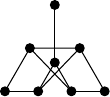}}{\caption{$F_1$}}
\ffigbox[0.18\textwidth]{\includegraphics{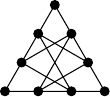}}{\caption{$F_2$}}
\ffigbox[0.18\textwidth]{\includegraphics{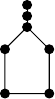}}{\caption{$F_3$}}
\ffigbox[0.18\textwidth]{\includegraphics{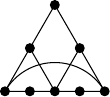}}{\caption{$F_4$}}
\ffigbox[0.18\textwidth]{\includegraphics{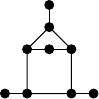}}{\caption{$F_5$}}
\end{subfloatrow}}{}
\bigskip
\ffigbox[\FBwidth]{%
\begin{subfloatrow}
\ffigbox[0.18\textwidth]{\includegraphics{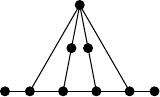}}{\caption{$F_6$}}
\ffigbox[0.18\textwidth]{\includegraphics{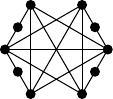}}{\caption{$F_7$}}
\ffigbox[0.18\textwidth]{\includegraphics{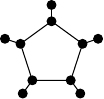}}{\caption{$F_8$}}
\ffigbox[0.18\textwidth]{\includegraphics{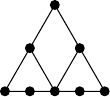}}{\caption{$\BII{2}$}}
\ffigbox[0.18\textwidth]{\includegraphics{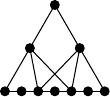}}{\caption{$\BII{3}$}}
\end{subfloatrow}}{}
\bigskip
\ffigbox[\FBwidth]{%
\begin{subfloatrow}
\ffigbox[0.18\textwidth]{\includegraphics{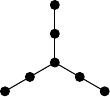}}{\caption{$\BIII{1}$}}
\ffigbox[0.18\textwidth]{\includegraphics{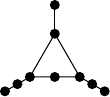}}{\caption{$\BIII{1}$}}
\ffigbox[0.18\textwidth]{\includegraphics{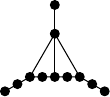}}{\caption{$\BIII{2}$}}
\ffigbox[0.18\textwidth]{\includegraphics{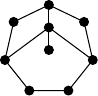}}{\caption{$Z$}}
\end{subfloatrow}}{}
}{\caption{Some small graphs}\label{fig:more-small}}
\end{figure}

We begin by determining all claw-free essential obstacles.

\begin{lem}\label{lem:claw-free-obst} The claw-free essential obstacles are $\overline{3K_2}$, $\overline{P_7}$, $\overline{F_1}$, $\overline{F_2}$, $\overline{F_3}$, $\overline{F_4}$, $\overline{H_3}$, net, $\overline{2P_4}$, $\overline{F_5}$, $\overline{F_6}$, $\overline{F_7}$, and $\overline{F_8}$.\end{lem}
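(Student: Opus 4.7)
The plan is to enumerate all claw-free essential obstacles by a systematic case analysis driven by the sequence of witness-enumeration types $\mathcal O_1/\mathcal O_2$ around the core. Fix an essential obstacle enumeration $\mathcal Q = v_1,v_2,\ldots,v_k$ of a claw-free essential obstacle $G$, with core $Q=V(\mathcal Q)$. The crucial structural input is the paragraph following the definition of \emph{valid}: the only witness-to-witness edges that can occur in an essential obstacle are $u_iz_i$ (forced by $\mathcal O_2$), $w_{i-1}u_i$, $z_{i-1}w_i$, and $z_{i-1}u_i$, or the identification $z_{i-1}=u_i$ when two $\mathcal O_2$ enumerations are consecutive. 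In particular, witnesses of the forms $w_i,w_j$ (with $i\neq j$), $u_i,u_j$, or $z_i,z_j$ are pairwise non-adjacent, and $u_iz_j$ with $j\neq i,i-1$ is never an edge; only \emph{consecutive} witness enumerations can contribute any adjacency between distinct enumerations.

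First I would bound $k$. The witness neighbors of $v_1$ are exactly the witnesses whose unique non-neighbor in $Q$ lies in $\{v_2,\ldots,v_k\}\setminus\{v_1\}$; by the consecutivity restriction above, three such witness neighbors whose enumerations are pairwise non-consecutive in $\mathcal Q$ are automatically mutually non-adjacent, and together with $v_1$ form a claw. A quick check shows that for every pattern of types $\mathcal W_1,\ldots,\mathcal W_k$ with $k\geq 5$ such a triple exists, so $k\leq 4$. An analogous inspection rules out several patterns already at $k=4$ (for instance $\mathcal O_1,\mathcal O_2,\mathcal O_2,\mathcal O_2$ produces the triple $u_2,u_3,u_4$ in $N(v_1)$), leaving only a short list of admissible configurations.

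Next, for each surviving pair $(k,\text{type pattern})$ I would exhaust the remaining adjacency choices. By the remark at the end of Section~\ref{sec:main}, every pair of consecutive $\mathcal O_2$ enumerations offers the three-way choice (a) $z_{i-1}=u_i$, (b) distinct and adjacent, or (c) distinct and non-adjacent; each $\mathcal O_1/\mathcal O_2$ transition offers the binary choice of whether $w_{i-1}u_i$ (resp.\ $z_{i-1}w_i$) is an edge. In each sub-case I would check for a claw at a core vertex (using the consecutivity restriction to find three mutually non-adjacent witness neighbors of some $v_i$) and at a witness vertex (typically an $u_i$ or $z_{i-1}$ whose core neighborhood $Q\setminus\{v_i\}$ is large, combined with a witness or a core vertex that is non-adjacent to two of its core neighbors). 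Most sub-cases die quickly; the survivors are identified with the graphs in the statement by explicit isomorphism on a small vertex set. For instance, $k=3$ with all $\mathcal O_1$ yields the net; $k=3$ with all $\mathcal O_2$ and choice (a) everywhere yields $\overline{3K_2}$; other patterns yield $\overline{P_7}$, $\overline{H_3}$, $\overline{2P_4}$, and the $\overline{F_i}$.

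The principal obstacle is combinatorial bookkeeping. With $k\in\{3,4\}$, up to $k$ independent type choices, and several per-transition adjacency choices, there are dozens of raw configurations, and exploiting the dihedral symmetry of $\mathcal Q$ (rotations and reversals, in the spirit of the ternary-bracelet equivalences in the remark) is essential to reduce these to a manageable list. The technical heart is the restriction on valid witness-to-witness edges; once this is in hand, claw-freeness at the core vertices forces such a rigid structure that each surviving configuration can be recognized, with a small computation of the complement, as one of the named graphs. Finally, matching the surviving essential obstacles against the list, and verifying that each graph in the list is indeed a claw-free essential obstacle (which is immediate from constructing the advertised enumeration), completes the proof.
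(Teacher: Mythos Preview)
Your bounding step is where the argument breaks. You claim that for every type pattern with $k\geq 5$ one can find three witness neighbours of $v_1$ lying in pairwise non-consecutive witness enumerations, and hence conclude $k\leq 4$. But for $k=5$ there are \emph{no} three pairwise non-consecutive indices in $\{1,\dots,5\}$ (the maximum independent set in $C_5$ has size $2$), so your quick check cannot succeed there. And indeed $\overline{F_8}$ is a claw-free essential obstacle with $k=5$: taking all $\mathcal W_i$ of type $\mathcal O_2$ with $z_{i-1}=u_i$ for every $i$, the triple $u_2,u_3,u_4\subseteq N(v_1)$ is not independent, because $u_3=z_2$ makes $u_2u_3$ the forced edge $u_2z_2$, and likewise $u_3u_4$. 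The same identification phenomenon undermines your $k=4$ example $\mathcal O_1,\mathcal O_2,\mathcal O_2,\mathcal O_2$: whether $\{v_1,u_2,u_3,u_4\}$ is a claw depends on whether $z_2=u_3$ and $z_3=u_4$, so that pattern is not excluded outright.

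The paper's proof avoids this by first using claw-freeness to constrain the transition adjacencies themselves (for consecutive $\mathcal O_2$ enumerations, $z_{i-1}$ and $u_i$ must be equal or adjacent; for $k\geq 4$ two consecutive $\mathcal O_1$'s are impossible; for $k\geq 4$ two consecutive $\mathcal O_2$'s force $z_{i-1}=u_i$ unless $G\cong\overline{F_5}$), and only then bounding $k$: the conclusion is not $k\leq 4$ but rather that $k\geq 5$ forces $G\cong\overline{F_8}$. Once these facts are in hand, the enumeration for $k\in\{3,4\}$ goes through essentially as you describe. So your overall plan is sound, but you must replace the non-consecutivity bound by a finer analysis that tracks the identifications $z_{i-1}=u_i$, and you must allow $k=5$ to survive exactly in the $\overline{F_8}$ case.
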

\begin{proof} Let $G$ be an essential obstacle. Let $\mathcal Q=v_1,v_2,\ldots,v_k$ be an essential obstacle enumeration of $G$ and, for each $i\in[k]$, let $\mathcal W_i$ be the witness enumeration of $v_iv_{i+1}$ in $\mathcal Q$.

For each $i\in[k]$, we have the following facts:
\begin{enumerate}[{Fact }1:]
 \item\label{fact1} \emph{If $\mathcal W_{i-1}=u_{i-1},z_{i-1}$ and $\mathcal W_i=u_i,z_i$, then $z_{i-1}$ and $u_i$ are either equal or adjacent.} Otherwise, $\{v_{i-1},z_{i-1},v_i,u_i\}$ induces claw in $G$.

 \item \emph{If $\mathcal W_{i-1}=u_{i-1},z_{i-1}$ and $\mathcal W_i=w_i$, then $z_{i-1}$ and $w_i$ are adjacent.} Otherwise, $\{v_{i-1},z_{i-1},v_i,w_i\}$ induces claw in $G$.

 \item \emph{If $\mathcal W_{i-1}=w_{i-1}$ and $\mathcal W_i=u_i,z_i$, then $w_{i-1}$ and $u_i$ are adjacent.} Otherwise, $\{v_{i+1},\linebreak w_{i-1},v_i,u_i\}$ induces claw in $G$.

 \item\label{fact4} \emph{If $k\geq 4$, then it is not possible that $\mathcal W_{i-1}=w_{i-1}$ and $\mathcal W_i=w_i$ simultaneously.} Otherwise, $\{v_{i+2},w_{i-1},v_i,w_i\}$ induces claw in $G$. (Notice that $w_{i-1}$ and $w_i$ are nonadjacent because $\mathcal Q$ is essential.)
 
 \item\label{fact5} \emph{If $k\geq 4$, $\mathcal W_{i-1}=u_{i-1},z_{i-1}$, and $\mathcal W_i=u_i,z_i$, then $z_{i-1}=u_i$ unless $G$ is isomorphic to $\overline{F_5}$.} Suppose $k\geq 4$, $\mathcal W_{i-1}=u_{i-1},z_{i-1}$, $\mathcal W_i=u_i,z_i$, and $z_{i-1}\neq u_i$. Because of Fact~\ref{fact1}, $z_{i-1}$ is adjacent to $u_i$. Because of Fact~\ref{fact4} and by symmetry, we assume, without loss of generality, that $\mathcal W_{i-2}=u_{i-2},z_{i-2}$. As $\mathcal Q$ is essential and $k\geq 4$, $z_{i-1}$ is nonadjacent to $u_{i-2}$ and to $z_i$. Thus, $u_{i-2}$ is adjacent to $z_i$, since otherwise $\{v_{i-1},u_{i-2},z_{i-1},z_i\}$ would induce claw in $G$. As $\mathcal Q$ is essential, $k=4$ and $\mathcal W_{i+1}=z_i,u_{i-2}$. Also because $\mathcal Q$ is essential and $k\geq 4$, $u_i$ is nonadjacent to $u_{i-1}$ and $u_{i-2}$. Hence, $u_{i-2}$ is adjacent to $u_{i-1}$, since otherwise $\{v_{i+1},u_{i-2},u_{i-1},u_i\}$ would induced claw in $G$. The essentiality of $\mathcal Q$ implies $z_{i-2}=u_{i-1}$. We conclude that $G$ is isomorphic to $\overline{F_5}$.
 
 \item \emph{If $k\geq 5$, then $G$ is isomorphic to $\overline{F_8}$.} Suppose, for a contradiction, that there exists no $i\in[k]$ such that $\mathcal W_{i-1}=u_{i-1},z_{i-1}$ and $\mathcal W_i=u_i,z_i$. Because of Fact~\ref{fact4}, $k$ is even and, without loss of generality, for each $i\in[k]$, $\mathcal W_j=u_i,z_i$ if $i$ is odd and $\mathcal W_i=w_i$ if $i$ is even. In particular, $\{v_2,u_1,u_3,u_5\}$ induces claw in $G$. This contradiction proves that there is some $i\in[k]$ such that $\mathcal W_{i-1}=u_{i-1},z_{i-1}$ and $\mathcal W_i=u_i,z_i$. Because of Fact~\ref{fact5}, $z_{i-1}=u_i$. If there were some witness $y\in V(\mathcal W_{i+2})$ simultaneously nonadjacent to $u_{i-1}$ and $z_i$, then $\{v_i,u_{i-1},z_i,y\}$ would induce claw in $G$. Hence, $k=5$ and $\mathcal W_{i+2}=u_{i+2},v_{i+2}$, where $u_{i+2}$ is adjacent to $z_i$ and $z_{i+2}$ is adjacent to $u_{i-1}$. As $\mathcal Q$ is essential, $G$ is isomorphic to $\overline{F_8}$.
\end{enumerate}
The lemma now follows by a direct enumeration of all the possible cases for $k=3$ and $k=4$ taking into account the above facts.\end{proof}

Along this section, we will rely on some results about concave-round graphs. A graph is \emph{concave-round}~\cite{MR1760336} (sometimes also a \emph{$\Gamma$ circular-arc graph} or a \emph{Tucker circular-arc graph}) if there is a circular enumeration of its vertices such that the closed neighborhood of each vertex is an interval in the enumeration. The class of concave-round graphs was first studied by Tucker~\cite{MR0309810,MR0379298}, who proved the following.

\begin{thm}[\cite{MR0309810}]\label{thm:concave->CA} Every concave-round graph is a circular-arc graph.\end{thm}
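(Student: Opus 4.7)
The plan is to build, from the concave-round enumeration $v_1,v_2,\ldots,v_n$ with $N[v_i]=\{v_{a_i},v_{a_i+1},\ldots,v_{b_i}\}$ (indices modulo $n$), an explicit circular-arc model of $G$. The overall idea is to place $n$ reference points $p_1,p_2,\ldots,p_n$ clockwise on a circle $C$ and then assign to each vertex $v_i$ an arc $A_i$ whose extent on $C$ encodes the circular interval $[a_i,b_i]$ of its closed neighborhood, with the $p_k$ serving as landmarks.

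The naive choice---letting $A_i$ be the clockwise arc from a position just counterclockwise of $p_{a_i}$ to a position just clockwise of $p_{b_i}$, so that $A_i$ crosses precisely the reference points $p_j$ corresponding to $v_j\in N[v_i]$---already fails on simple examples such as $C_4$, where two opposite non-adjacent vertices have the same pair of common neighbors and their naive arcs intersect spuriously at the two corresponding reference points. The required refinement is to place the endpoints of each $A_i$ not at the $p_k$ themselves but in the gaps between consecutive $p_k$'s, and to fix a careful relative order for the several endpoints that fall into the same gap. Following the approach of Tucker's original argument, within the gap just clockwise of each $p_k$ I would place first all right endpoints of arcs whose closed neighborhood ends at $v_k$, then all left endpoints of arcs whose closed neighborhood starts at $v_{k+1}$, breaking further ties with the enumeration order.

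With the endpoints in place, the adjacency-versus-intersection dictionary $A_i\cap A_j\neq\emptyset \iff v_iv_j\in E(G) \text{ or } i=j$ is verified by a case analysis on the relative cyclic positions of the four endpoints of $A_i$ and $A_j$. The forward direction is quick: $v_i\sim v_j$ gives $j\in[a_i,b_i]$ and $i\in[a_j,b_j]$, which together with the endpoint placement force a reference point to lie in both arcs. The hard part---and the main obstacle I expect---is the reverse direction: showing that whenever $v_i\not\sim v_j$ the two arcs are actually disjoint, despite the fact that they may still share common neighbors whose reference points both arcs cross in the naive layout. It is there that the full symmetric circular-ones structure of the closed-neighborhood matrix of a concave-round graph (not only the row form $j\in[a_i,b_i]$ but also its symmetric counterpart $i\in[a_j,b_j]$) must be invoked to certify, via the gap-ordering rule, that at least one endpoint of one arc falls outside the other, ruling out every remaining potential overlap.
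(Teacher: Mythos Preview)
The paper does not prove this theorem; it merely quotes Tucker's result. So there is no in-paper argument to compare your sketch against. That said, your plan has a genuine gap that is worth pointing out.

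Your construction assigns to $v_i$ an arc $A_i$ covering \emph{all} reference points $p_j$ with $v_j\in N[v_i]$, and then tries to repair spurious overlaps by ordering endpoints within each gap. But the overlap in your own $C_4$ example is not an endpoint-ordering artifact: with the cyclic labelling $v_1,v_2,v_3,v_4$ one has $N[v_1]=\{v_4,v_1,v_2\}$ and $N[v_3]=\{v_2,v_3,v_4\}$, so $A_1$ and $A_3$ each cover three of the four reference points --- three quarters of the circle --- and therefore must intersect no matter how the endpoints are interleaved in the gaps. Your right-endpoints-before-left-endpoints rule does not (and cannot) separate them; the arcs are simply too long.

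Tucker's construction avoids this by taking shorter arcs. One clean version: let $A_i$ be the clockwise arc from $p_i$ to $p_{b_i}$, i.e.\ only the ``forward half'' $\{v_i,v_{i+1},\ldots,v_{b_i}\}$ of the closed neighbourhood. Then $A_i\cap A_j\neq\emptyset$ iff $j\in[i,b_i]$ or $i\in[j,b_j]$ (circularly), and the nontrivial step is to show that $v_i\sim v_j$ implies at least one of these --- which is exactly where the symmetry $j\in N[v_i]\Leftrightarrow i\in N[v_j]$ together with the interval structure is used. Endpoint perturbation is still needed, but now only to break ties among arcs sharing a start or end index, not to undo overlaps that are forced by arc length. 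If you want to salvage your write-up, replace the ``cover all of $N[v_i]$'' idea with this half-neighbourhood model and redo the case analysis accordingly.
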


As noticed in~\cite{MR1760336}, concave-round graphs can be recognized in linear time by means of the linear-time recognition algorithm for the circular-ones property devised in~\cite{MR0433962}.

\begin{thm}[\cite{MR1760336,MR0433962}]\label{thm:BoothLueker} Concave-round graphs can be recognized in linear time.\end{thm}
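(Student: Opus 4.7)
The plan is to recast concave-roundness as a circular-ones property on the closed-neighborhood matrix of $G$ and then invoke the classical Booth--Lueker PQ-tree test. Form the matrix $M(G)$ whose rows and columns are both indexed by $V(G)$ and whose $(u,v)$-entry equals $1$ iff $v\in N_G[u]$. Directly from the definition of concave-round, $G$ is concave-round iff there is a circular permutation of the columns of $M(G)$ such that in every row the $1$-entries occupy a circularly consecutive block; equivalently, iff $M(G)$ has the \emph{circular-ones property for rows}. The matrix has $n+2m$ nonzero entries and can therefore be represented in $O(n+m)$ space, simply by listing each closed neighborhood.

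Next, I would reduce the circular-ones test for $M(G)$ to the standard consecutive-ones test by a single pivoting step. Pick any column $c_0$ and form $M'$ from $M(G)$ by complementing every row that has a $1$ in column $c_0$. Then $M(G)$ has the circular-ones property iff $M'$ has the consecutive-ones property for rows: given a circular arrangement witnessing circular-ones in $M(G)$, cut the circle immediately before $c_0$ and observe that the rows whose consecutive block crossed the cut are exactly those with a $1$ at $c_0$; after complementation their blocks become consecutive in the resulting linear arrangement, while the remaining rows are unaffected. The converse direction is obtained by reversing the construction. This pivoting step alters only $O(n+m)$ entries in total, because the row of $c_0$ and the list of rows touching $c_0$ together have that size, so it fits within the linear time budget.

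Finally, I would apply the Booth--Lueker PQ-tree algorithm~\cite{MR0433962} to $M'$. This algorithm decides in time linear in the number of $1$-entries whether a $0/1$ matrix admits the consecutive-ones property for rows and, if so, outputs a PQ-tree whose frontiers enumerate all admissible column orderings. Since $M'$ has $O(n+m)$ nonzero entries, the test takes $O(n+m)$ time, and any admissible frontier, reinterpreted as a circular ordering of $V(G)$ after undoing the cut, witnesses concave-roundness; otherwise $G$ is not concave-round. The principal technical difficulty, and the reason the result is quoted rather than proved from scratch, lies in the Booth--Lueker PQ-tree machinery itself: maintaining the template reductions that process each new row and amortizing them against the size of the tree. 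By contrast, the equivalence between concave-roundness and the circular-ones property of $M(G)$, and the single-pivot reduction from circular-ones to consecutive-ones, are both routine verifications from the definitions.
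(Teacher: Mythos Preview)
Your approach is precisely what the paper sketches: the sentence preceding the theorem says only that concave-round graphs can be recognized via the linear-time circular-ones test of Booth and Lueker applied to the closed-neighborhood matrix, and then states the result without proof. Your reconstruction of the reduction (closed-neighborhood matrix $\rightarrow$ circular-ones $\rightarrow$ consecutive-ones via a pivot column $\rightarrow$ PQ-tree) is the standard one and matches the paper's one-line indication.

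There is, however, a genuine gap in your size accounting. You write ``Pick any column $c_0$'' and then assert that the pivoted matrix $M'$ has $O(n+m)$ nonzero entries. This fails for an arbitrary $c_0$: the rows that get complemented are those indexed by $N_G[c_0]$, and after complementation each such row $u$ has $n-\vert N_G[u]\vert$ ones, so these rows together contribute up to $n\cdot(\deg(c_0)+1)$ ones to $M'$, which is $\Theta(n^2)$ when $c_0$ has high degree. Your justification (``the row of $c_0$ and the list of rows touching $c_0$ together have that size'') does not bound this quantity. The standard fix is to choose $c_0$ of \emph{minimum} degree $\delta$: then $n\delta\le\sum_v\deg(v)=2m$, so the complemented rows contribute at most $n(\delta+1)\le 2m+n$ ones, and together with the uncomplemented rows $M'$ has $O(n+m)$ ones, making the Booth--Lueker call genuinely linear. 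With this single correction your argument goes through.
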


Our analysis will also rely on the following result concerning the minimal forbidden induced subgraphs for the class of concave-round graphs.

\begin{thm}[\cite{MR4043758}]\label{thm:concave-round} The minimal forbidden induced subgraphs for the class of concave-round graphs are: net, tent$^\ast$, $\overline{H_3}$, $\overline{\BII{1}}$, $\overline{\BII{2}}$, $\overline{\BIII{1}}$, $\overline{\BIII{2}}$, $\overline{\BIII{3}}$, $C_k^\ast$ for each $k\geq 4$, $\overline{C_{2k}}$ for each $k\geq 3$, and $\overline{C_{2k+1}^\ast}$ for each $k\geq 1$. Moreover, given a graph $G$ that is not concave-round, one of these minimal forbidden induced subgraphs contained in $G$ as an induced subgraph can be found in linear time.\end{thm}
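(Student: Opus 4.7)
The plan is to convert the concave-round property into a matrix condition and then apply Tucker's classical characterization. For a graph $G$, let $M(G)$ be the augmented adjacency matrix (adjacency matrix with ones on the diagonal), whose rows and columns are both indexed by $V(G)$. Unpacking the definition from the start of this section, $G$ is concave-round if and only if the rows of $M(G)$ can be permuted so that, in every column, the ones appear cyclically consecutively; i.e., $M(G)$ has the circular-ones property for rows. Tucker gave a complete list of forbidden submatrices (up to row and column permutations) for this property, and this list is the hinge on which the whole argument turns.

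First I would verify, one by one, that each graph in the candidate list fails to be concave-round and that every proper induced subgraph of each is concave-round. For the sporadic members (net, tent$^\ast$, $\overline{H_3}$, $\overline{\BII{1}}$, $\overline{\BII{2}}$, $\overline{\BIII{1}}$, $\overline{\BIII{2}}$, $\overline{\BIII{3}}$) this is a finite check, by exhibiting a witnessing circular enumeration for each one-vertex deletion and arguing on the augmented adjacency matrix that no enumeration of the whole graph can work. For the parametric families $C_k^\ast$, $\overline{C_{2k}}$, and $\overline{C_{2k+1}^\ast}$ the verification can be done uniformly by a short structural induction on the cycle length, using Theorem~\ref{thm:concave->CA} on the proper induced subgraphs.

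The main step is completeness: assuming $G$ is not concave-round, exhibit one of the listed graphs as an induced subgraph. I would take a Tucker forbidden submatrix $M$ inside $M(G)$, let $R$ and $C$ be the row- and column-index sets of $M$, and analyze the induced subgraph $H=G[R\cup C]$ to locate one of the listed graphs. The main obstacle will be that the rows and columns of $M(G)$ both range over $V(G)$ and can overlap: the forbidden submatrix does not determine $H$, and additional edges or vertices of $G$ outside $R\cup C$ may be needed. When $H$ is not yet one of the listed graphs one must perform local surgeries, either deleting superfluous vertices of $H$ or enlarging $R\cup C$ with further witnesses from $G$, until a listed graph is obtained. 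I expect this bookkeeping, handled separately for each of Tucker's configurations, to be the most delicate part of the argument.

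For the algorithmic statement the plan is to instrument a linear-time test of the circular-ones property on $M(G)$, such as the PQ-tree-based algorithm underlying Theorem~\ref{thm:BoothLueker}, so that upon failure it outputs an explicit Tucker forbidden submatrix together with the labels of its rows and columns. Feeding this certificate into a constructive version of the reduction above, carried out in $O(1)$ per configuration, then yields the desired induced subgraph within the same $O(n+m)$ bound.
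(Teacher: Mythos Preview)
The paper does not prove this theorem: it is quoted verbatim from~\cite{1611.02216} and used as a black box (see Theorem~\ref{thm:concave-round} and its citation). There is therefore nothing in the present paper to compare your proposal against.

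That said, your outline is the natural one and is in line with how the cited reference proceeds: reduce concave-roundness to the circular-ones property on the closed-neighborhood matrix and invoke Tucker's forbidden-submatrix list. Your identification of the real work is accurate --- the passage from a Tucker submatrix on row set $R$ and column set $C$ to an induced subgraph of $G$ on $R\cup C$ is where all the case analysis lives, since $M(G)$ is symmetric with ones on the diagonal and the Tucker configurations do not respect that constraint. What you have written is a plan rather than a proof: the phrase ``local surgeries \ldots\ until a listed graph is obtained'' hides exactly the content that needs to be supplied, and the algorithmic claim likewise presupposes a certifying circular-ones test that returns a Tucker submatrix in linear time, which is itself a nontrivial ingredient drawn from the literature. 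None of this is wrong, but as it stands it is a roadmap to the argument in~\cite{1611.02216}, not an independent proof.
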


A graph is \emph{quasi-line}~\cite{benrebea} if it contains no induced $\overline{C_{2k+1}^*}$ for any $k\geq 1$. The class of quasi-line graphs is a subclass of the class of graphs containing no induced claw and no induced 5-wheel. The equivalence of assertions \eqref{it:it1} and \eqref{it:it2} in the theorem below was noticed in~\cite{MR4043758}. 
By combining Theorems~\ref{thm:HCAobst} and \ref{thm:main}, Lemma~\ref{lem:claw-free-obst}, and Theorems~\ref{thm:concave->CA} and~\ref{thm:concave-round}, we are now able to extend the equivalence to assertion \eqref{it:it3}, which is the characterization by minimal forbidden induced subgraphs for the class of quasi-line Helly circular-arc graphs.

\begin{cor}\label{cor:quasi-HCA-forb} For each graph $G$, the following assertions are equivalent:
\begin{enumerate}[(i)]
 \item\label{it:it1} $G$ is concave-round and a Helly circular-arc;

 \item\label{it:it2} $G$ is quasi-line and a Helly circular-arc;
 
 \item\label{it:it3} $G$ contains no induced claw, $\overline{C_5^*}$, $\overline{C_7^*}$, $\overline{3K_2}$, $\overline{P_7}$, $\overline{F_1}$, $\overline{F_2}$, $\overline{H_3}$, net, $\overline{2P_4}$, $\overline{F_8}$, $\overline{C_6}$, tent$^*$, or $C_k^*$ for any $k\geq 4$.
\end{enumerate}\end{cor}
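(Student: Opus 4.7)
The plan is to prove the equivalence (i) $\iff$ (iii); combined with the already-noted equivalence of (i) and (ii) from \cite{1611.02216}, this extends the chain to include (iii). Both directions rely on the forbidden-subgraph characterizations of concave-round graphs (Theorem~\ref{thm:concave-round}) and of Helly circular-arc graphs within circular-arc graphs (Theorem~\ref{thm:main} via essential obstacles), together with Theorem~\ref{thm:concave->CA}, which connects the two.

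For the direction (i) $\Rightarrow$ (iii), I would verify that every graph listed in (iii) fails (i) by being either not concave-round or not Helly circular-arc. The graphs claw, $\overline{C_5^*}$, $\overline{C_7^*}$, $\overline{H_3}$, net, $\overline{C_6}$, tent$^*$, and $C_k^*$ for $k\ge 4$ appear as minimal forbidden induced subgraphs for concave-round graphs in Theorem~\ref{thm:concave-round}, hence are not concave-round. The remaining graphs $\overline{3K_2}$, $\overline{P_7}$, $\overline{F_1}$, $\overline{F_2}$, $\overline{2P_4}$, and $\overline{F_8}$ all appear in the list of claw-free essential obstacles of Lemma~\ref{lem:claw-free-obst}, so by Lemma~\ref{lem:essential-HCA} each is a minimal forbidden induced subgraph for the class of Helly circular-arc graphs.

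For the direction (iii) $\Rightarrow$ (i), I would establish concave-roundness and the Helly circular-arc property of $G$ separately. To show that $G$ is concave-round, by Theorem~\ref{thm:concave-round} it suffices to verify that every minimal forbidden induced subgraph for the class of concave-round graphs contains an induced subgraph from (iii). For net, tent$^*$, $\overline{H_3}$, $C_k^*$ with $k\ge 4$, $\overline{C_6}$, claw, $\overline{C_5^*}$, and $\overline{C_7^*}$ this is immediate. For the two infinite families I would use uniform arguments: $\overline{C_{2k}}$ with $k\ge 4$ contains an induced $\overline{P_7}$ on any seven consecutive vertices of $C_{2k}$ (which induce $P_7$), and $\overline{C_{2k+1}^*}$ with $k\ge 4$ contains an induced $\overline{3K_2}$ on the six vertices $v_1,v_2,v_4,v_5,v_7,v_8$ of $C_{2k+1}$ (the three cyclic pairs yield the three non-edges of the perfect matching, while every other pair is at distance at least $2$ in $C_{2k+1}$). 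The finitely many remaining graphs $\overline{\BII{1}}$, $\overline{\BII{2}}$, $\overline{\BIII{1}}$, $\overline{\BIII{2}}$, and $\overline{\BIII{3}}$ are handled by direct inspection. Once $G$ is known to be concave-round, Theorem~\ref{thm:concave->CA} yields that $G$ is a circular-arc graph, so by Theorem~\ref{thm:main} it remains to rule out induced essential obstacles in $G$. Since $G$ is claw-free, any such induced essential obstacle belongs to the list of Lemma~\ref{lem:claw-free-obst}; for the graphs in that list not already forbidden by (iii), namely $\overline{F_3}$, $\overline{F_4}$, $\overline{F_5}$, $\overline{F_6}$, and $\overline{F_7}$, I would verify by inspection that each contains some graph from (iii) as an induced subgraph.

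The main technical obstacle will be this last round of case-checking: confirming that each of $\overline{\BII{1}}$, $\overline{\BII{2}}$, $\overline{\BIII{1}}$, $\overline{\BIII{2}}$, $\overline{\BIII{3}}$, $\overline{F_3}$, $\overline{F_4}$, $\overline{F_5}$, $\overline{F_6}$, and $\overline{F_7}$ contains an induced graph from (iii). This is a bounded but non-illuminating inspection that requires explicit knowledge of the adjacencies of each graph, but once it is carried out the equivalence follows immediately by combining it with the uniform arguments above and the cited theorems.
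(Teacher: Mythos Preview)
Your proposal is correct and follows essentially the same route as the paper: both directions hinge on Theorem~\ref{thm:concave-round}, Theorem~\ref{thm:concave->CA}, Theorem~\ref{thm:main}, and Lemma~\ref{lem:claw-free-obst}, with the only differences being organizational (the paper runs the cycle (i)$\Rightarrow$(ii)$\Rightarrow$(iii)$\Rightarrow$(i)) and in the specific witnesses chosen for the case checks (the paper uses $\overline{P_7}$ inside $\overline{C_{2k+1}^*}$ and inside each $\overline{\BII{k}}$, $\overline{3K_2}$ inside each $\overline{\BIII{k}}$, and an induced $5$-wheel~$=\overline{C_5^*}$ inside each of $\overline{F_3},\ldots,\overline{F_7}$).
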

\begin{proof} \eqref{it:it1}${}\Rightarrow{}$\eqref{it:it2} Because concave-round graphs are quasi-line.

\eqref{it:it2}${}\Rightarrow{}$\eqref{it:it3} By Theorem~\ref{thm:HCAobst} and Lemma~\ref{lem:claw-free-obst} because $\overline{3K_2}$, $\overline{P_7}$, $\overline{F_1}$, $\overline{F_2}$, $\overline{H_3}$, net, $\overline{2P_4}$, $\overline{F_8}$ are obstacles and $\overline{C_6}$, tent$^*$, and $C_k^*$ for each $k\geq 4$ are not circular-arc graphs.

\eqref{it:it3}${}\Rightarrow{}$\eqref{it:it1} Let $G$ be a graph satisfying \eqref{it:it3}. Theorem~\ref{thm:concave-round} implies that $G$ is a concave-round graph because $\overline{\BII{k}}$ contains an induced $\overline{P_7}$ for each $k\in\{1,2\}$, $\overline{\BIII{k}}$ contains an induced $\overline{3K_2}$ for each $k\in\{1,2,3\}$, and each of $\overline{C_{2k}}$ and $\overline{C_{2k+1}^*}$ contains an induced $\overline{P_7}$ for each $k\geq 4$. In particular, $G$ is quasi-line. Moreover, by Theorem~\ref{thm:concave->CA}, $G$ is also a circular-arc graph. Hence, if $G$ were not a Helly circular-arc graph, then, by virtue of Theorem~\ref{thm:main}, $G$ would contain as an induced subgraph one of the essential obstacles listed in Lemma~\ref{lem:claw-free-obst}, contradicting either~\eqref{it:it3} or the fact that $G$ is quasi-line. (Notice that each of $\overline{F_3}$, $\overline{F_4}$, $\overline{F_5}$, $\overline{F_6}$, and $\overline{F_7}$ contains an induced $5$-wheel.) Therefore, $G$ is also a Helly circular-arc graph.\qedhere
\end{proof}

A circular-arc model $\mathcal A$ is \emph{proper} if no arc in the set $\mathcal A$ is strictly contained in another arc in $\mathcal A$. A \emph{proper circular-arc graph}~\cite{MR0309810} is a circular-arc graph having a proper circular-arc model. The results of Tucker in \cite{MR0309810,MR0379298} imply the following.

\begin{thm}[\cite{MR0309810,MR0379298}]\label{thm:Tucker} A graph is a proper circular-arc graph if and only if it is a $\{\overline{H_2},\overline{H_4}\}$-free concave-round graph.\end{thm}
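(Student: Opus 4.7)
The plan is to prove the two directions separately, using the concave-round enumeration as the bridge between a proper circular-arc model and the combinatorial structure of the graph. For the forward direction, suppose $G$ has a proper circular-arc model $\mathcal A=\{A_v:v\in V(G)\}$. I would order the arcs circularly by their clockwise endpoints, producing a circular enumeration $v_1,\ldots,v_n$ of $V(G)$. In a proper model no arc is strictly contained in another, so the set of arcs meeting a fixed $A_v$ is exactly a contiguous block in this circular enumeration; hence $N_G[v]$ is an interval for every $v$, and $G$ is concave-round. It remains to check that neither $\overline{H_2}$ nor $\overline{H_4}$ admits a proper circular-arc model, which follows from a direct case analysis on how the arcs for the two vertices of degree $n-2$ in each graph must be placed: whichever linear order we attempt for their endpoints, one arc ends up strictly contained in another.

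For the converse, let $G$ be a $\{\overline{H_2},\overline{H_4}\}$-free concave-round graph and fix a circular enumeration $v_1,\ldots,v_n$ of $V(G)$ in which each closed neighborhood $N_G[v_i]$ is an interval $[v_{\ell_i},v_{r_i}]$. I would build arcs $A_{v_i}=[p_i,q_i]$ on a circle by choosing $2n$ distinct points: the left endpoints $p_1,\ldots,p_n$ appearing in the given circular order, and the right endpoints $q_1,\ldots,q_n$ interleaved so that $A_{v_i}\cap A_{v_j}\ne\emptyset$ iff $v_iv_j\in E(G)$, which is possible precisely because of the interval structure of the closed neighborhoods. The resulting model $\mathcal A$ represents $G$; the heart of the proof is then to show that $\mathcal A$ can be chosen to be proper.

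The main obstacle is exactly this last step. If some $A_{v_i}$ is strictly contained in some $A_{v_j}$, then $N_G[v_i]\subseteq N_G[v_j]$ and the interval $[v_{\ell_i},v_{r_i}]$ is strictly contained in $[v_{\ell_j},v_{r_j}]$ in the concave-round enumeration; I would analyze what the ``extra'' vertices on each side of $v_i$ inside $N_G[v_j]$ force, together with the fact that $v_i$ and $v_j$ are at maximum distance in the cyclic order among their common neighbors. A short structural argument should show that any such configuration either admits a local adjustment of the endpoints that eliminates the strict containment (shifting $p_i$ or $q_i$ across a gap in the enumeration), or it exhibits one of $\overline{H_2}$, $\overline{H_4}$ as an induced subgraph. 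Iterating the local adjustment across all offending pairs, ordered by inclusion, yields a proper model, completing the reverse direction. Finally, I would remark that both directions fit into Tucker's matrix-theoretic framework from \cite{MR0309810,MR0379298}, where the distinction between proper circular-arc graphs and concave-round graphs corresponds to the circular-ones property for rows versus for columns of the augmented adjacency matrix, and the extra forbidden subgraphs $\overline{H_2}$ and $\overline{H_4}$ account precisely for this asymmetry.
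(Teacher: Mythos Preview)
The paper does not prove this theorem at all: it is stated with the citation \cite{MR0309810,MR0379298} and used as a black box to derive the subsequent corollary. There is therefore no proof in the paper to compare your proposal against; the result is attributed to Tucker and imported as known.

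As for the soundness of your sketch on its own merits: the forward direction is fine, and your closing remark about the circular-ones property for rows versus columns of the augmented adjacency matrix is in fact the route Tucker actually takes in \cite{MR0309810,MR0379298}. The weak point is the converse direction. The phrase ``a short structural argument should show that any such configuration either admits a local adjustment \ldots\ or exhibits one of $\overline{H_2}$, $\overline{H_4}$'' is doing all the work and is not justified. In particular, $N_G[v_i]\subseteq N_G[v_j]$ alone does not immediately produce a six- or seven-vertex configuration; you need to argue carefully about chains of strict neighborhood containments and how the extremal vertices on each side interact, and you need to ensure that the ``local adjustments'' terminate without creating new containments elsewhere. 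If you want a self-contained proof rather than a citation, it is cleaner to follow Tucker's matrix route: show that $G$ is proper circular-arc iff its augmented adjacency matrix has the circular-ones property for both rows and columns, and then identify $\overline{H_2}$ and $\overline{H_4}$ as exactly the minimal concave-round obstructions to the column property.
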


Corollary~\ref{cor:quasi-HCA-forb} together with Theorem~\ref{thm:Tucker} leads to the result below, which characterizes the intersection of the classes of proper circular-arc graphs and Helly circular-arc graphs by minimal forbidden induced subgraphs. Notice that graphs in this intersection do not necessarily have circular-arc models which are proper and Helly simultaneously~\cite{MR2428582}.

\begin{cor} For each graph $G$, the following conditions are equivalent:
\begin{enumerate}[(i)]
 \item $G$ is a proper circular-arc graph and a Helly circular-arc graph;
 \item $G$ is quasi-line, $\{\overline{H_2},\overline{H_4}\}$-free, and a Helly circular-arc graph;

 \item $G$ contains no induced claw, $\overline{C_5^*}$, $\overline{C_7^*}$, $\overline{3K_2}$, $\overline{P_7}$, $\overline{F_1}$, $\overline{F_2}$, $\overline{H_2}$, $\overline{H_3}$, $\overline{H_4}$, net, $\overline{2P_4}$, $\overline{F_8}$, $\overline{C_6}$, tent$^*$, or $C_k^*$ for any $k\geq 4$.
\end{enumerate}\end{cor}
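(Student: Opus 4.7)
The plan is to prove the equivalence by the cyclic chain \eqref{it:it1}${}\Rightarrow{}$(ii)${}\Rightarrow{}$(iii)${}\Rightarrow{}$\eqref{it:it1}, piggybacking essentially entirely on Corollary~\ref{cor:quasi-HCA-forb} and on Tucker's Theorem~\ref{thm:Tucker}. The list of forbidden graphs in condition~(iii) is exactly the list appearing in Corollary~\ref{cor:quasi-HCA-forb}\eqref{it:it3} augmented by the two extra graphs $\overline{H_2}$ and $\overline{H_4}$, so the transition between the two results amounts precisely to encoding properness via Theorem~\ref{thm:Tucker}.

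For \eqref{it:it1}${}\Rightarrow{}$(ii), I would observe that Theorem~\ref{thm:Tucker} says a proper circular-arc graph is a $\{\overline{H_2},\overline{H_4}\}$-free concave-round graph; since concave-round graphs are quasi-line (they contain no induced $\overline{C_{2k+1}^\ast}$ by Theorem~\ref{thm:concave-round}, or alternatively by the equivalence \eqref{it:it1}${}\Leftrightarrow{}$\eqref{it:it2} of Corollary~\ref{cor:quasi-HCA-forb}), assertion~(ii) follows immediately. For (ii)${}\Rightarrow{}$(iii), I would apply Corollary~\ref{cor:quasi-HCA-forb}\eqref{it:it2}${}\Rightarrow{}$\eqref{it:it3} to $G$ (which is quasi-line and a Helly circular-arc graph by hypothesis), obtaining that $G$ contains none of the graphs in Corollary~\ref{cor:quasi-HCA-forb}\eqref{it:it3}; the additional $\{\overline{H_2},\overline{H_4}\}$-freeness hypothesis contributes exactly the two missing graphs of the list in~(iii).

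For the closing implication (iii)${}\Rightarrow{}$\eqref{it:it1}, I would assume the forbidden-subgraph condition~(iii). In particular, the list in Corollary~\ref{cor:quasi-HCA-forb}\eqref{it:it3} is contained in the list of~(iii), so by the implication \eqref{it:it3}${}\Rightarrow{}$\eqref{it:it1} of Corollary~\ref{cor:quasi-HCA-forb}, $G$ is a concave-round Helly circular-arc graph. Combining the concave-roundness with the (iii)-hypothesis that $G$ contains no induced $\overline{H_2}$ and no induced $\overline{H_4}$, Theorem~\ref{thm:Tucker} yields that $G$ is a proper circular-arc graph, closing the cycle.

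There is no real obstacle here: the whole argument is a bookkeeping exercise of matching forbidden lists against the previously established equivalences, so the only point I would double-check when writing the proof is that every single graph of the list in~(iii) is accounted for and that $\overline{H_2}$ and $\overline{H_4}$ do not accidentally appear already inside the concave-round or Helly circular-arc forbidden lists used through Corollary~\ref{cor:quasi-HCA-forb} (which would make the stated list non-minimal but would not affect correctness).
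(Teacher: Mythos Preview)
Your proposal is correct and follows exactly the approach the paper indicates, namely combining Corollary~\ref{cor:quasi-HCA-forb} with Theorem~\ref{thm:Tucker}; the paper itself does not spell out the cyclic chain, but your (i)${}\Rightarrow{}$(ii)${}\Rightarrow{}$(iii)${}\Rightarrow{}$(i) is precisely how that combination is meant to be read.
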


Our aim now is to extend Corollary~\ref{cor:quasi-HCA-forb} to the characterization of Helly circular-arc graphs by minimal forbidden induced subgraphs restricted to the class of graphs containing no induced claw and no induced 5-wheel.

We say that a graph $G$ is a \emph{multiple}~\cite{MR1271882} of a graph $H$ if $G$ arises from $H$ by replacing each vertex of $v$ of $H$ by a nonempty clique $Q_v$ and making two different cliques $Q_v$ and $Q_w$ complete (resp.\ anticomplete) in $G$ if and only if the corresponding vertices $v$ and $w$ are adjacent (resp.\ nonadjacent) in $H$. In particular, every graph is a multiple of itself. 
Let $Z$ be the graph depicted in Figure~\ref{fig:more-small}. Notice that $\overline Z$ is a Helly circular-arc graph and that any multiple of a Helly circular-arc graph is also a Helly circular-arc graph. The following lemma shows that the multiples of $\overline{C_7^*}$ and $\overline Z$ are the only Helly circular-arc graphs that contain no induced claw and no induced $5$-wheel but are not quasi-line.

\begin{lem}\label{lem:coC7*} If $G$ is a graph having an induced subgraph $J$ isomorphic to $\overline{C_7^*}$, then $G$ satisfies exactly one of the following assertions:
\begin{enumerate}[(i)]
 \item\label{it:VR1} it contains an induced claw, $5$-wheel, $C_4^*$, $\overline{3K_2}$, or $\overline{P_7}$;
 
 \item\label{it:VR2} it is a multiple of $\overline{C_7^*}$ or $\overline Z$.
\end{enumerate}
Moreover, given such a graph $G$, it can be decided in linear time whether or not $G$ satisfies assertion~\eqref{it:VR1} and, if it does, given $G$ and $J$, an induced subgraph of $G$ isomorphic to one of graphs listed in assertion~\eqref{it:VR1} can also be found in linear time.\end{lem}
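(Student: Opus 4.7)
The plan is to classify each vertex of $V(G)\setminus V(J)$ by its neighborhood on $V(J)$ and then reassemble the equivalence classes as a clique blow-up. Label the vertices of $J\cong\overline{C_7^*}$ as $c_0,c_1,\ldots,c_7$, where $c_0$ is the universal vertex of $J$ and $c_1,\ldots,c_7$ induce an $\overline{C_7}$ with $c_ic_j\in E(J)$ if and only if $i-j\not\equiv\pm 1\pmod 7$. Thus $N_G(c_0)\cap V(J)=\{c_1,\ldots,c_7\}$ and $N_G(c_i)\cap V(J)=\{c_0,c_{i-3},c_{i-2},c_{i+2},c_{i+3}\}$ for each $i\in[7]$, giving eight distinguished neighborhood patterns on $V(J)$.

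The heart of the argument is a classification lemma: for every $v\in V(G)\setminus V(J)$, one of the following holds: (a) $G[V(J)\cup\{v\}]$ contains an induced claw, $5$-wheel, $C_4^*$, $\overline{3K_2}$, or $\overline{P_7}$; (b) $N_G(v)\cap V(J)$ equals one of the eight distinguished patterns above; or (c) $N_G(v)\cap V(J)$ is one specific additional pattern which, together with $V(J)$, induces $\overline Z$. The case analysis splits on whether $c_0\in N_G(v)$ and on $|N_G(v)\cap\{c_1,\ldots,c_7\}|$. The claw-free and $5$-wheel-free hypotheses, combined with the forbidden $C_4^*$, $\overline{3K_2}$, and $\overline{P_7}$, quickly force $N_G(v)\cap V(J)$ into the short list of admissible patterns: for example, two nonadjacent nonneighbors of $v$ inside $\{c_1,\ldots,c_7\}$ together with $c_0$ produce either a claw, a $5$-wheel through $c_0$, or a $C_4^*$; a $v$ nonadjacent to $c_0$ with a small neighborhood on $\{c_1,\ldots,c_7\}$ is eliminated via $\overline{3K_2}$ or $\overline{P_7}$. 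The sevenfold cyclic symmetry of $\overline{C_7}$ reduces the work to a handful of essentially distinct subcases.

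Granted the classification, define $Q_i=\{c_i\}\cup\{v\in V(G)\setminus V(J):v\text{ has type (b) with index }i\}$ for $i\in\{0,1,\ldots,7\}$, and, when applicable, an extra class $Q_8$ for the type-(c) vertices. I will verify, again by forbidden-subgraph arguments, that each $Q_i$ is a clique --- a nonadjacent pair inside $Q_i$ together with two vertices of $J$ induces a $C_4^*$ or a claw --- and that $Q_i$ is complete to $Q_j$ when $c_ic_j\in E(J)$ and anticomplete otherwise (and analogously for $Q_8$ against the fixed $\overline Z$-pattern). Consequently, if every exterior vertex is of type (b), then $G$ is a multiple of $\overline{C_7^*}$; if some exterior vertex is of type (c), $G$ is a multiple of $\overline Z$. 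Mutual exclusivity of assertions (i) and (ii) is immediate, since none of claw, $5$-wheel, $C_4^*$, $\overline{3K_2}$, or $\overline{P_7}$ occurs as an induced subgraph of any multiple of $\overline{C_7^*}$ or $\overline Z$.

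The main obstacle will be the exhaustive case analysis in the classification lemma; it is tedious but tractable, thanks to the sevenfold symmetry of $\overline{C_7}$ and to the fact that $c_0$ is universal in $J$. For the linear-time bound, mark the eight vertices of $V(J)$ in an indicator array, and, for each $v\in V(G)\setminus V(J)$, scan its adjacency list in $O(\deg(v))$ time to compute $N_G(v)\cap V(J)$; then classify $v$ in $O(1)$ against the precomputed list of admissible patterns. If some $v$ falls into case (a), output the corresponding $O(1)$-sized induced forbidden subgraph on $V(J)\cup\{v\}$ and halt; otherwise report the partition $(Q_i)$. Since $|V(J)|=8$ is constant, the total cost is $O(n+m)$.
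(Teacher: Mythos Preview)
Your plan is workable but has gaps. First, the eight distinguished patterns must be the \emph{closed} neighborhoods $N_J[c_i]$, not $N_J(c_i)$: a vertex destined for $Q_i$ has to be adjacent to $c_i$ itself, since $Q_i$ is to be a clique containing $c_i$. Second, there is not one type-(c) pattern but seven, indexed by an anchor $j\in[7]$ (namely $\{c_{j-2},c_{j-3},c_{j+3},c_{j+2}\}$), and for $G$ to be a multiple of $\overline Z$ all type-(c) vertices must share the same anchor; two type-(c) vertices with different anchors is a case your classification lemma does not cover and that your step-3 verification must explicitly dispose of.

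The more serious gap is algorithmic. Your procedure classifies each external vertex against $V(J)$ and, if none falls into case (a), ``reports the partition''. But your own step 3 shows that the partition can still fail to be a blow-up: a nonedge inside some $Q_i$, a wrong adjacency between $Q_i$ and $Q_j$, or two type-(c) vertices with different anchors, is compatible with every individual vertex having an admissible trace on $V(J)$; in that situation (i) holds via a forbidden subgraph supported on \emph{two} external vertices together with some of $V(J)$, which your single-vertex scan never examines. You need a second linear pass (for each vertex, compare its neighbor-count in each class to what a genuine multiple dictates; on a discrepancy, locate the offending pair and exhibit the forbidden subgraph on it and $V(J)$). The paper sidesteps all of this by arguing inductively on $|V(G)|$: assuming $G-x$ already carries a correct partition $(V_1,\ldots,V_7,U,W)$, it tests the one new vertex $x$ against that full partition rather than just against $V(J)$, so any inconsistency of $x$ with any member of any class is caught in the single-vertex step and no separate pairwise pass is needed.
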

\begin{proof} We say that an ordered partition $(V_1,\ldots,V_7,U,W)$ of $V(G)$ \emph{proves \eqref{it:VR2} for $G$} if, for each $i\in\{1,\ldots,7\}$, all the following assertions hold where, all along this proof, subindices on $V_i$ are modulo $7$:
\begin{enumerate}[$(1)$]
 \item $V_1,\ldots,V_7,U$ are nonempty;

 \item $V_1,\ldots,V_7,U,W$ are cliques;
 
 \item $V_i$ is complete to $V_{i-2},V_{i-1},V_{i+1},V_{i+2}$ and anticomplete to $V_{i-3}$ and $V_{i+3}$;

 \item $U$ is complete to $V_1,\ldots,V_7$;

 \item\label{it:RW} There is some $j\in\{1,\ldots,7\}$ such that $W$ is complete to $V_{j-2}$, $V_{j-3}$, $V_{j+3}$, and $V_{j+2}$, and anticomplete to $V_{j-1},V_j,V_{j+1},U$.
\end{enumerate}

The proof is by induction on the number $n$ of vertices of $G$. If $n=8$, $G$ is isomorphic to $\overline{C_7^*}$ and $G$ satisfies \eqref{it:VR2} trivially. Suppose $n\geq 9$ and that the lemma holds for graphs on $n-1$ vertices. Let $H$ be any induced subgraph of $G$ on $n-1$ vertices such that $V(J)\subseteq V(H)$. By the induction hypothesis, there is an ordered partition $\mathcal P_H=(V_1,\ldots,V_7,U,W)$ of $V(H)$ that proves \eqref{it:VR2} for $H$. For each $i\in\{1,\ldots,7\}$, let $v_i$ be an arbitrarily chosen vertex in the set $V_i$. Let $u$ be an arbitrarily chosen vertex in the set $U$. Let $x$ such that $V(G)-V(H)=\{x\}$.

We claim that either $G$ satisfies \eqref{it:VR1} or an ordered partition $\mathcal P_G$ that proves \eqref{it:VR2} for $G$ arises from $\mathcal P_H$ by adding vertex $x$ to one of the sets of $\mathcal P_H$. We analyze the possible neighbors and nonneighbors of $x$ in $H$ up to symmetry. In each case, we assume that none of the preceding cases hold.

\begin{enumerate}[{Case }1:]
  \item\label{casex:1} \emph{$x$ has nonneighbors $b_{i-2}\in V_{i-2}$ and $b_{i+2}\in V_{i+2}$ for some $i\in\{1,\ldots,7\}$.} On the one hand, if $x$ has some neighbor $b\in V_i\cup V_{i+3}$, then the set $\{b_{i-2},b_{i+2},x,b\}$ induces claw in $G$. On the other hand, if $x$ is anticomplete to $V_i\cup V_{i+3}$, then $\{b_{i-2},v_i,b_{i+2},v_{i+3},x\}$ induces $C_4^*$ in $G$.
  
  \item\label{casex:2} \emph{$x$ has nonneighbors $b_{i-1}\in V_{i-1}$ and $b_{i+1}\in V_{i+1}$ for some $i\in\{1,\ldots,7\}$.} As Case~\ref{casex:1} does not hold, $x$ is complete to $V_{i-2}$, $V_{i-3}$, $V_{i+3}$, and $V_{i+2}$. If $x$ has some neighbor $b\in V_i\cup U$, then $\{v_{i-2},b_{i-1},b_{i+1},v_{i+2},x,b\}$ induces $5$-wheel in $G$. Hence, we assume, without loss of generality, that $x$ is anticomplete to $V_i$ and $U$. If $x$ is adjacent to some $b_{i+1}'\in V_{i+1}$, then $\{u,x,v_i,v_{i-3},b_{i+1}',v_{i-2},v_{i+2}\}$ induces $\overline{P_7}$ in $G$. Hence, we assume, without loss of generality, that $x$ is anticomplete to $V_{i+1}$ and, symmetrically, also to $V_{i-1}$. Let $w\in W$ (if any) and let $j$ satisfying~\eqref{it:RW}. If $x$ is nonadjacent to $w$, then either $\{x,w,u,v_{j-2}\}$ or $\{x,w,u,v_{j+2}\}$ induces claw in $G$ because $x$ is not simultaneously nonadjacent to $v_{j+2}$ and $v_{j-2}$. Hence, we assume, without loss of generality, that $x$ is adjacent to $w$. If $V_i=V_{j+1}$ or $V_i=V_{j+2}$, then $\{x,w,v_{j+2},u,v_{j-1},v_{j-3}\}$ induces $5$-wheel in $G$. Symmetrically, if $V_i=V_{j-1}$ or $V_i=V_{j-2}$, then $\{x,w,v_{j-2},u,v_{j+1},v_{j+3}\}$ induces $5$-wheel in $G$. If $V_i=V_{j+3}$ or $V_i=V_{j-3}$, then
  $\{x,w,v_{j-3},u,v_j,v_{j-2}\}$ or $\{x,w,v_{j+3},u,v_j,v_{j+2}\}$ induces $5$-wheel in $G$, respectively. Hence, without loss of generality, if there is some $w\in W$, then $x$ is adjacent to $w$ and $j=i$. Therefore, the partition $\mathcal P_G$ that arises from $\mathcal P_H$ by adding $x$ to $W$ proves \eqref{it:VR2} for $G$.

    \item\label{casex:3} \emph{$x$ has nonneighbors $b_{i-3}\in V_{i-3}$ and $b_{i+3}\in V_{i+3}$}. As neither Case~\ref{casex:1} nor Case~\ref{casex:2} holds, $x$ is complete to $V_{i-2}$, $V_{i-1}$, $V_i$, $V_{i+1}$, and $V_{i+2}$. If $x$ has some neighbor $b_{i+3}'\in V_{i+3}$, then $\{x,b_{i-3},v_{i+1},v_{i-2},v_{i+2},v_{i-1},b_{i+3}'\}$ induces $\overline{P_7}$ in $G$. Hence, we assume, without loss of generality, that $x$ is anticomplete to $V_{i+3}$ and, by symmetry, also to $V_{i-3}$. If $x$ has some nonneighbor $b\in U$, then $\{b,x,v_{i+3},v_{i-1},v_{i+2},v_{i-2},v_{i+1}\}$ induces $\overline{P_7}$ in $G$. Hence, we assume, without loss of generality, that $x$ is complete to $U$. We will now prove that if there is some  $w\in W$ and $j$ satisfies~\eqref{it:RW}, then one of the following assertions holds:
    \begin{enumerate}[(a)]
     \item\label{it:a} $x$ is adjacent to $w$ if and only if $V_i\in\{V_{j-2},V_{j-3},V_{j+3},V_{j+2}\}$;
      
     \item\label{it:b} $G$ contains an induced claw, $5$-wheel, or $\overline{P_7}$.
    \end{enumerate}
    If $V_i=V_{j+1}$, then either $x$ is nonadjacent to $w$ or $\{v_{j+2},v_{j-2},x,v_{j-3},v_j,w,u\}$ induces $\overline{P_7}$ in $G$. Symmetrically, if $V_i=V_{j-1}$, then $x$ is nonadjacent to $w$ or $G$ contains an induced $\overline{P_7}$. If $V_i=V_{j+2}$ or $V_i=V_{j+3}$, then $x$ is adjacent to $w$ or $\{w,x,v_{j-1},v_{j-3}\}$ induces claw in $G$. Symmetrically, if $V_i=V_{j-2}$ or $V_i=V_{j-3}$, then $x$ is adjacent to $w$ or $G$ contains an induced claw. This completes the proof that either (a) or (b) holds for each $w\in W$ and each $j$ satisfying~\eqref{it:RW}. Therefore, the partition $\mathcal P_G$ that arises from $\mathcal P_H$ by adding $x$ to $V_i$ proves \eqref{it:VR2} for $G$.   

   \item\label{casex:4} \emph{$x$ has some nonneighbor $b_i\in V_i$.} As none of the preceding cases hold, $x$ is adjacent to $v_k$ for each $k\in\{1,\ldots,7\}-\{i\}$. Thus, $\{x,b_i,v_{i+3},v_{i-1},v_{i+2},v_{i-2},v_{i+1}\}$ induces $\overline{P_7}$ in $G$.

   \item\label{casex:5} \emph{$x$ is complete to $V_1\cup\cdots\cup V_7$.} If $x$ has a nonneighbor $b\in U$, then $\{x,b,v_{i-2},v_{i+1},v_i,v_{i+3}\}$ induces $\overline{3K_2}$ in $G$ for any $i\in\{1,\ldots,7\}$. Hence, we assume, without loss of generality, that $x$ is complete to $U$. Let $w\in W$ (if any) and let $j$ satisfying \eqref{it:RW}. If $x$ is adjacent to $w$, then $\{w,v_{j-2},v_{j-1},v_{j+1},v_{j+2},w,x\}$ induces $5$-wheel in $G$. Thus, we assume, without loss of generality, that $x$ is anticomplete to $W$. Therefore, the partition $\mathcal P_G$ that arises from $\mathcal P_H$ by adding $x$ to $U$ proves \eqref{it:VR2} for $G$.
\end{enumerate}
We have completed the proof of the claim and of the first assertion of the lemma.

Since the multiples of $\overline{C_7^*}$ and $\overline Z$ are Helly circular-arc graphs, $G$ satisfies exactly one of \eqref{it:VR1} and \eqref{it:VR2}. Hence, deciding whether $G$ satisfies \eqref{it:VR1} is equivalent to deciding whether $G$ does not satisfy \eqref{it:VR2}, which can be decided in linear time (e.g., by the algorithm for computing representative graphs in \cite{MR1354190}). If $G$ satisfies \eqref{it:VR1}, then a direct implementation of the inductive proof above gives a linear-time algorithm that, given $G$ and $J$, finds one of the induced subgraphs of $G$ listed in \eqref{it:VR1}.\end{proof}

We now give the main result of this section.

\begin{thm}\label{thm:claw-5wheel-algo} There is a linear-time algorithm that, given any graph $G$ that is not a Helly circular-arc graph, finds an induced subgraph of $G$ isomorphic to claw, $5$-wheel, or one of the following minimal forbidden induced subgraphs for the class of Helly circular-arc graphs: $\overline{3K_2}$, $\overline{P_7}$, $\overline{F_1}$, $\overline{F_2}$, $\overline{H_3}$, net, $\overline{2P_4}$, $\overline{F_8}$, $\overline{C_6}$, tent$^*$, or $C_k^*$ for any $k\geq 4$.\end{thm}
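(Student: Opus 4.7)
The plan is to dispatch on whether $G$ is concave-round, since on each side of that dichotomy a linear-time algorithm is already available that hands us a structural certificate, and only a bounded amount of post-processing is needed to map each certificate into the target list. Using Theorem~\ref{thm:BoothLueker}, decide in linear time whether $G$ is concave-round.

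Suppose first that $G$ is concave-round. Then $G$ is a circular-arc graph by Theorem~\ref{thm:concave->CA}, and $G$ is claw-free: in any circular enumeration witnessing concave-roundness, each leaf $\ell$ of a claw with center $v$ would force $v$ to be a circular-neighbor of $\ell$ (since $N[\ell]=\{\ell,v\}$ must be a circular interval), but $v$ has only two circular-neighbors in the enumeration while a claw has three leaves; this, combined with the hereditary nature of concave-roundness, rules out induced claws. As $G$ is not HCA, Corollary~\ref{cor:cAminHCA} returns in linear time an essential obstacle $H$ induced in $G$, and $H$ is claw-free. Lemma~\ref{lem:claw-free-obst} then forces $H$ to be one of $13$ specific, bounded-size graphs. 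Eight of them---$\overline{3K_2}$, $\overline{P_7}$, $\overline{F_1}$, $\overline{F_2}$, $\overline{H_3}$, net, $\overline{2P_4}$, and $\overline{F_8}$---already appear in the target list and are output directly; each of the remaining five, $\overline{F_3},\ldots,\overline{F_7}$, contains an induced $5$-wheel (as noted in the proof of Corollary~\ref{cor:quasi-HCA-forb}), locatable in constant time inside the bounded-size $H$.

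Suppose now that $G$ is not concave-round. By Theorem~\ref{thm:concave-round}, obtain in linear time an induced minimal forbidden subgraph $F$ for concave-round graphs, and dispatch on its isomorphism type. If $F$ is one of net, tent$^*$, $\overline{H_3}$, $\overline{C_6}$, $C_k^*$ for some $k\geq 4$, $\overline{C_3^*}$ (the claw), or $\overline{C_5^*}$ (the $5$-wheel), then $F$ itself lies in the target list and is output. If $F\in\{\overline{\BII{1}},\overline{\BII{2}}\}$, extract an induced $\overline{P_7}$ in constant time; if $F\in\{\overline{\BIII{1}},\overline{\BIII{2}},\overline{\BIII{3}}\}$, extract an induced $\overline{3K_2}$ in constant time. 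If $F=\overline{C_{2k}}$ with $k\geq 4$ or $F=\overline{C_{2k+1}^*}$ with $k\geq 4$, any seven consecutive vertices of the underlying cycle (which has at least $8$ vertices) induce $P_7$ there and hence $\overline{P_7}$ in $F$, extractable in $O(n)$ time.

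The remaining case, and the main obstacle, is $F=\overline{C_7^*}$, because $\overline{C_7^*}$ does not itself belong to the target list. Here I invoke Lemma~\ref{lem:coC7*} with $J=F$: since every multiple of $\overline{C_7^*}$ and every multiple of $\overline{Z}$ is a Helly circular-arc graph while $G$ is not, alternative~(ii) of that lemma fails, so alternative~(i) must hold, and the algorithmic part of Lemma~\ref{lem:coC7*} produces in linear time an induced subgraph of $G$ isomorphic to claw, $5$-wheel, $C_4^*$, $\overline{3K_2}$, or $\overline{P_7}$, each of which lies in the target list (with $C_4^*$ being the $k=4$ case of $C_k^*$). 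All branches of the dispatch run in linear time, completing the algorithm.
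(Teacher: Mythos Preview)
Your proof is correct and follows essentially the same two-branch strategy as the paper: test concave-roundness, then in the concave-round branch extract an essential obstacle via Corollary~\ref{cor:cAminHCA}, and in the non-concave-round branch post-process the minimal obstruction from Theorem~\ref{thm:concave-round}, with Lemma~\ref{lem:coC7*} handling the $\overline{C_7^*}$ case. The one small difference is in the concave-round branch: the paper simply notes that concave-round graphs are quasi-line, which immediately rules out $\overline{F_3},\ldots,\overline{F_7}$ (each contains an induced $5$-wheel $=\overline{C_5^*}$), so the essential obstacle $H$ must already be one of the eight target graphs; you instead argue $G$ is claw-free and then extract a $5$-wheel from $H$ when $H\in\{\overline{F_3},\ldots,\overline{F_7}\}$. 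Your handling is harmless but is dead code, since those five cases cannot arise inside a concave-round $G$.
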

\begin{proof} Let $G$ be a graph that is not a Helly circular-arc graph. We first apply the algorithm of Theorem~\ref{thm:BoothLueker} to decide whether or not $G$ is concave-round.

Suppose first $G$ is concave-round. In particular, $G$ is quasi-line. Moreover, by virtue of Theorem~\ref{thm:concave->CA}, $G$ is also a circular-arc graph. We apply the algorithm of Corollary~\ref{cor:cAminHCA} to find an essential obstacle $H$ contained in $G$ as an induced subgraph. As $G$ is quasi-line, $H$ is quasi-line and belongs to the list of essential obstacles in Lemma~\ref{lem:claw-free-obst}. Hence, $H$ is isomorphic to one of the following graphs: $\overline{3K_2}$, $\overline{P_7}$, $\overline{F_1}$, $\overline{F_2}$, $\overline{H_3}$, net, $\overline{2P_4}$, or $\overline{F_8}$. We output $H$.

Suppose now that $G$ is not concave-round. Thus, we apply the algorithm of Theorem~\ref{thm:concave-round} to find a minimal forbidden induced subgraph $J$ for the class of concave-round graphs contained in $G$ as an induced subgraph. If $J$ is isomorphic to net, tent$^*$, $\overline{H_3}$, $\overline{C_6}$, or $C_k^*$ for some $k\geq 4$, we output $J$. If $J$ is isomorphic to $\overline{\BII{k}}$ for some $k\in\{2,3\}$ or to $\overline{C_{2k}}$ for some $k\geq 4$, we output an induced subgraph of $J$ isomorphic to $\overline{P_7}$. If $J$ is isomorphic to $\overline{\BIII{k}}$ for some $k\in\{1,2,3\}$, we output an induced subgraph of $J$ isomorphic to $\overline{3K_2}$. It only remains to consider the case where $J$ is isomorphic to $\overline{C_{2k+1}^*}$ for some $k\geq 1$. If $k\in\{1,2\}$, then $J$ is isomorphic to claw or $5$-wheel and we output $J$. If $k\geq 4$, then we output an induced subgraph of $J$ isomorphic to $\overline{P_7}$. Finally, if $k=3$, then we output an induced subgraph of $G$ isomorphic to claw, $5$-wheel, $C_4^*$, $\overline{3K_2}$, or $\overline{P_7}$ obtained through the algorithm of Lemma~\ref{lem:coC7*}. 

In all cases, we produce one of the induced subgraphs required by the statement of the theorem. The linear time bound for the whole procedure follows from the linear time bounds given in Corollary~\ref{cor:cAminHCA}, Theorems~\ref{thm:BoothLueker} and~\ref{thm:concave-round}, and Lemma~\ref{lem:coC7*}.\end{proof}

As a consequence, we obtain the minimal forbidden induced subgraph characterization for the class of Helly circular-arc graphs restricted to graphs containing no induced claw and no induced $5$-wheel.

\begin{cor} Let $G$ be a graph containing no induced claw and no induced $5$-wheel. Then, $G$ is a Helly circular-arc graph if and only if $G$ contains no induced $\overline{3K_2}$, $\overline{P_7}$, $\overline{F_1}$, $\overline{F_2}$, $\overline{H_3}$, net, $\overline{2P_4}$, $\overline{F_8}$, $\overline{C_6}$, tent$^*$, or $C_k^*$ for any $k\geq 4$.\end{cor}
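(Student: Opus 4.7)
The plan is that this corollary is a direct consequence of Theorem~\ref{thm:claw-5wheel-algo} together with the fact that each of the listed graphs is itself not a Helly circular-arc graph.

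For the necessity direction ($\Rightarrow$), I would verify that each of the listed graphs fails to be a Helly circular-arc graph, so that a Helly circular-arc graph cannot contain any of them as an induced subgraph. For the first batch $\overline{3K_2}$, $\overline{P_7}$, $\overline{F_1}$, $\overline{F_2}$, $\overline{H_3}$, net, $\overline{2P_4}$, and $\overline{F_8}$, this is immediate from Lemma~\ref{lem:claw-free-obst} (each is an essential obstacle) combined with Theorem~\ref{thm:main} (essential obstacles are minimal forbidden induced subgraphs for the class of Helly circular-arc graphs). For the remaining graphs, I note that $\overline{C_6}$ is listed in Figure~\ref{fig:forb-HCA} as a minimal forbidden induced subgraph for the class, and that tent$^\ast$ and $C_k^\ast$ for $k\geq 4$ are not circular-arc graphs by Theorem~\ref{thm:concave-round} (they are minimal forbidden induced subgraphs for the class of concave-round graphs, which contains the class of Helly circular-arc graphs); alternatively, each contains $C_4^\ast$ as an induced subgraph, and $C_4^\ast$ is already in Figure~\ref{fig:forb-HCA}.

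For the sufficiency direction ($\Leftarrow$), I would argue by contraposition. Assume $G$ contains no induced claw and no induced $5$-wheel, and suppose for a contradiction that $G$ is not a Helly circular-arc graph. Applying Theorem~\ref{thm:claw-5wheel-algo} to $G$ produces an induced subgraph of $G$ isomorphic to claw, $5$-wheel, or one of $\overline{3K_2}$, $\overline{P_7}$, $\overline{F_1}$, $\overline{F_2}$, $\overline{H_3}$, net, $\overline{2P_4}$, $\overline{F_8}$, $\overline{C_6}$, tent$^\ast$, or $C_k^\ast$ for some $k\geq 4$. The hypothesis rules out the first two possibilities, so $G$ must contain one of the remaining graphs as an induced subgraph, contradicting the assumption.

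There is no real obstacle here: all the substantive work has been done in Theorem~\ref{thm:claw-5wheel-algo} (and ultimately in Lemma~\ref{lem:claw-free-obst}, Lemma~\ref{lem:coC7*}, and Theorem~\ref{thm:main}), so the corollary is essentially a one-line consequence in each direction. The only minor point to be careful about is the explicit verification that every graph in the list is indeed not a Helly circular-arc graph, but this is handled uniformly by citing Lemma~\ref{lem:claw-free-obst} and Theorem~\ref{thm:main} for the essential-obstacle members of the list, and by noting non-membership in the circular-arc class (or direct inspection) for the rest.
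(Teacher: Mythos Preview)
Your overall approach is correct and matches the paper's: the corollary is stated immediately after Theorem~\ref{thm:claw-5wheel-algo} with no proof, precisely because both directions follow from it together with the fact that every listed graph fails to be Helly circular-arc.

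There is, however, a small but genuine slip in your necessity argument for tent$^\ast$ and $C_k^\ast$ with $k\geq 5$. Neither of the two justifications you offer actually works. First, the class of concave-round graphs does \emph{not} contain the class of Helly circular-arc graphs (for instance, the claw is an interval graph, hence Helly circular-arc, but it is not concave-round), so being a minimal forbidden induced subgraph for concave-round graphs says nothing about membership in the Helly circular-arc class. Second, $C_k^\ast$ for $k\geq 5$ does not contain an induced $C_4^\ast$, since $C_k$ is chordless and hence has no induced $C_4$; similarly, tent contains no induced $C_4$, so tent$^\ast$ contains no induced $C_4^\ast$. The correct argument, used implicitly in the proof of Corollary~\ref{cor:quasi-HCA-forb}, is that tent$^\ast$ and $C_k^\ast$ for $k\geq 4$ are not circular-arc graphs at all: the isolated vertex forces the rest of the graph to be an interval graph, but tent and $C_k$ ($k\geq 4$) are not interval graphs. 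With this correction, your proof goes through.
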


\section*{Acknowledgements}

This work was partially supported by ANPCyT PICT 2012-1324 and PICT 2017-1315, CONICET PIO 14420140100027CO, and Universidad Nacional del Sur Grant PGI 24/L115.

\end{document}